\documentclass[10pt]{article}
\usepackage{amsfonts}
\usepackage{}
\usepackage{bm}
\usepackage{amsthm}
\usepackage{multirow}
\usepackage{authblk}

\renewcommand{\div}{\mathop{\rm div}\nolimits}

\newcommand\numberthis{\addtocounter{equation}{1}\tag{\theequation}}
\oddsidemargin 0in \textwidth 6.5in \textheight 8.5in \topmargin
-0.5in

\usepackage{amsmath}
\usepackage{amssymb}
\usepackage{graphicx}
\usepackage{color}

\setcounter{totalnumber}{50}
\setcounter{topnumber}{50}
\setcounter{bottomnumber}{50}
\usepackage{marginnote}
\usepackage{amstext,amsthm,amssymb,amsmath}
\usepackage{graphicx}
\usepackage{subfigure}

\newcommand{\om}{\Omega}

\newtheorem{theorem}{Theorem}[section]
\newtheorem{lemma}[theorem]{Lemma}



\newcommand{\eps}{\epsilon}


\newcommand{\norm}[1]{\left\|#1\right\|}




\newcommand{\strain}{\mbox{$\boldsymbol{\varepsilon}$}}


%





\newcommand{\avrg}[2]{{\left \langle #1 \right \rangle}_{#2}}

\hfuzz=\maxdimen
\tolerance=10000
\hbadness=10000

\graphicspath{ {./pic/}}
\title{Online Adaptive Local Multiscale Model Reduction for Heterogeneous Problems in Perforated Domains}

\author[1,2]{
Eric T. Chung \thanks{Email: {\tt tschung@math.cuhk.edu.hk}.} }
\author[2]{
Yalchin Efendiev \thanks{
 Email: {\tt efendiev@math.tamu.edu}.}}
 
\author[4]{
Wing Tat Leung}

\author[2,5]{
 Maria Vasilyeva}
  
  \author[4]{
 Yating Wang
}

\affil[1]{Department of Mathematics,
The Chinese University of Hong Kong (CUHK), Hong Kong SAR. }

\affil[2]{Department of Mathematics \& Institute for Scientific Computation (ISC),
Texas A\&M University,
College Station, TX 77843-3368, USA.}


\affil[3]{Department of Mathematics, Texas A\&M University, College Station, TX 77843-3368, USA.}

\affil[4]{Department of Computational Technologies, Institute of Mathematics and Informatics, North-Eastern Federal University, Yakutsk, 677980, Republic of Sakha (Yakutia), Russia. }

\begin{document}
\maketitle

\begin{abstract}
In this paper, we develop and analyze an adaptive multiscale approach
 for heterogeneous problems in perforated domains.
We consider commonly used model problems including the Laplace equation,
the elasticity equation,
and the Stokes system in perforated regions.
In many applications, these problems have a multiscale
nature arising because of the perforations, their geometries, the sizes of the 
perforations, and
configurations.
 Typical modeling approaches extract average properties
in each coarse region, that encapsulate many perforations,
 and formulate a coarse-grid problem.
In some applications,
the coarse-grid problem can have a different form from the fine-scale
problem, e.g., the coarse-grid system
corresponding to a Stokes system in perforated domains leads to
Darcy equations on a coarse grid. In this paper, we present a general
offline/online procedure, which can adequately and adaptively
represent the local
degrees of freedom and derive appropriate coarse-grid equations.
Our approaches start with the offline procedure (following \cite{CELV2015}),
which constructs multiscale basis functions in each coarse region
and formulates coarse-grid equations. 
In \cite{CELV2015}, 
we presented the offline simulations without the analysis and adaptive
procedures, which are needed for accurate and efficient simulations.
The main contributions of
this paper are (1) the rigorous analysis of the offline approach (2) the
development of the online procedures and their analysis (3) the development
of adaptive strategies.
We present an online procedure,
which allows adaptively incorporating global information and is important for a fast convergence when
combined with the adaptivity.
We present online adaptive enrichment algorithms
for the three model problems mentioned above.
Our methodology allows adding and guides constructing new online multiscale basis
functions adaptively in appropriate regions.
We present the convergence analysis of the online adaptive enrichment algorithm
for the Stokes system.
In particular, we show that the
online procedure has a rapid convergence with a rate
related to the number of offline basis functions, and one
can obtain fast convergence by a sufficient
 number of offline basis functions,
which are computed in the offline stage.
The convergence theory can also be applied to the Laplace equation and the elasticity equation.
To illustrate the performance of our method, we present numerical results 
with both small and large perforations.
We see that only a few (1 or 2) online iterations can significantly
improve the offline solution.

\end{abstract}

\section{Introduction}


One important class of multiscale problems consists of problems
in perforated
domains (see Figure \ref{fig:perf_domain} for an illustration). 
In these problems, differential equations are formulated in perforated domains. These domains
can be considered the outside of inclusions or connected
bodies of various sizes.
Due to the variable sizes and geometries of these perforations, solutions to these problems have multiscale features.
One solution approach
involves posing the problem in a domain without perforations
but with a very high contrast penalty term representing the domain
heterogeneities (\cite{iliev2015modeling,verleye2008permeability, griebel2010homogenization, iliev2013numerical}).
However, the void space can be a small portion of the
whole domain and, thus, it is computationally expensive to
enlarge the domain substantially. 

Problems in perforated domains (\cite{sanchez1980non}),
as other multiscale problems,
require some model reduction techniques to reduce the computational cost. 
The main computational cost is due to the fine grid, which needs to resolve the space between the perforations.
There have been many homogenization
results  in perforated domains and for biphasic problems, where perforations can have
distinctly different properties, e.g.,
\cite{allaire1991homogenization,maz2010asymptotic,Jikov91,oleinik1996homogenization,fratrovic2015nonlinear,pankratova2015spectral,allaire2015upscaling,bare2015non,gilbert2014acoustic,muntean2012analysis,gilbert2006prototype,gilbert2003acoustic}. 
Homogenization approaches average microscale processes in perforations and outside and 
provide macroscale equations that differ from microscale equations. 
In the homogenization procedure, the local cell problems account for the
microscale interaction and are 
solved on a fine grid. Using the solutions of the local problems,
the effective properties can be computed.
The resulting homogenized equations can be solved on the coarse grid 
with the mesh size independent of
the size of the perforations for different boundary conditions and right hand sides.

To carry out the homogenization, typical assumptions on periodicity or
scale separation are needed to formulate the cell problems. Some generalization to
problems with random {\it homogeneous} pore-space geometries is introduced in a pioneering work
 \cite{beliaev1996darcy}, where the authors formulate assumptions, when homogenization 
can be done using representative volume element concepts. In these approaches, the cell
problems in very large domains are formulated and the effective properties are computed
using the solutions of the local problems. However, these approaches still assume that the
solution space can be approximated by the solutions of directional cell problems 
(i.e., $2$ cell problems in 2D) and the effective equations contain a limited number
of effective parameters (e.g., symmetric permeability tensor). 
These assumptions do not hold for general heterogeneities and
the effective properties may be richer (one may need more parameters). 
To study this, we use Generalized Multiscale Finite 
Element Method to identify necessary local cell solutions and obtain numerical macroscopic equations.

The main difference in developing
multiscale methods for problems in perforated domains is the complexity
of the domains and that many portions of the domain are excluded
in the computational domain. This poses a challenging task. For typical
upscaling and numerical homogenization (e.g., \cite{sanchez1980non, henning2009heterogeneous}),
the macroscopic equations
do not contain perforations and one computes the effective properties.
In multiscale methods, the macroscopic equations are numerically derived 
by computing multiscale basis functions \cite{Bris14, brown2014multiscale, CELV2015}.
Several multiscale methods have been developed for
problems in perforated domains. Our approaches are motivated
by recent works \cite{Maz'ya13, Bris14, Cao06, henning2009heterogeneous,Cao10,CELV2015}.
In this regard, we would like to mention recent works by Le Bris and his collaborators \cite{Bris14}, where
accurate multiscale basis functions are constructed.
These approaches differ from numerical homogenization and approaches
that use Representative Volume Element (RVE) \cite{ee03}.
However, these approaches do not contain a systematic way of enriching local multiscale spaces to 
obtain accurate macroscale representations of the underlying fine-scale problem.


Our proposed approaches are based on the Generalized Multiscale Finite
Element (GMsFEM) Framewowk\cite{GMsFEM13, AdaptiveGMsFEM, MixedGMsFEM}.
The GMsFEM follows the main concept of MsFEM
\cite{eh09,Iliev_MMS_11, Chu_Hou_MathComp_10, ab05, Babuska};
however, it systematically constructs multiscale basis
functions for each coarse block. The main idea of the GMsFEM is
to use local snapshot vectors (borrowed from global model reduction)
 to represent the solution space
and then identify local multiscale spaces by performing
appropriate local spectral problem. Using snapshot spaces
is essential in problems with perforations, because the snapshots contain
necessary geometry information. In the snapshot space, we perform
local spectral decomposition to identify multiscale basis functions.
These basis functions are derived based on the analysis presented in this paper.
The local multiscale basis functions obtained as a result represent the necessary degrees of
freedom to represent the microscale effects. This is in contrast to homogenization, where 
one apriori selects the number of cell problems.

We present the analysis of the proposed method.
We focus on analyzing Stokes
equations, since similar techniques can be easily extended to the elliptic and
the elasticity equations.
We note that in \cite{CELV2015}, we present the offline simulations for heterogeneous problems
in perforated domains.
In \cite{mixed-perforated}, the results for the mixed GMsFEM for the Laplace equation with
Neumann boundary conditions are presented.
The main contributions of
this paper are (1) the rigorous analysis of the offline approach (2) the
development of the online procedures and their analysis (3) the development
of adaptive strategies. 
We would like to emphasize that the adaptivity and online basis construction are important for
the success of multiscale methods. Indeed, in many regions, one may need only a few basis functions,
while some regions may require more degrees of freedom for approximating the solution space.
The online basis functions allow a fast convergence and takes into account global effects.

In the GMsFEM, the multiscale basis function construction is local
and uses both local snapshot solutions and local spectral problems.
In the paper, we discuss the use of randomized snapshots to reduce the offline cost associated
with the snapshot space computations.
One can use local oversampling techniques \cite{Efendiev_oversampling13}; however, the global
effects are still
not used. One can accelerate the convergence by computing multiscale
basis functions using a residual at the online stage \cite{chung2015residual,chan2015adaptive, ohlberger2015error}. This
is done by designing new multiscale basis functions, which solve
local problems using the global residual information. Online basis
functions are computed adaptively and only added in regions with largest
residuals. In this paper, we design online basis functions.
It is important that adding online basis function decreases
the error substantially and one can reduce the error in one iteration.
For this reason, constructing online basis functions
must guarantee that the error reduction is independent of small
scales and contrast.

Constructing online basis functions follows a rigorous analysis.
We show that if a sufficient number of offline multiscale basis functions
are chosen, one can substantially reduce the error. This reduction
is related to the eigenvalue that the corresponding eigenvector
is not included in the coarse space. Thus, one can get an estimate
of the error reduction apriori, which is important in practical simulations.
Our analysis for the offline procedure
 starts with the proof of the inf-sup condition, which shows the well-posedness of our scheme.
 Then, we derive an a-posteriori error bound for our GMsFEM.
 This bound shows that the error of the solution is bounded by a computable residual
 and an irreducible error. This irreducible error is a measure of approximating the fine-scale space
 by the snapshot space.
 We show that the convergence rate depends on the number of offline basis functions.
We note that in \cite{CELV2015}, we only present the offline simulation results without analysis. Based
on the analysis, we have modified some of multiscale basis functions for Stokes' equations
and moreover, introduced adaptive strategies and online basis construction techniques.

In our numerical examples, we consider two different geometries,
where one case includes only a few perforations
 and the other case includes many perforations.
We considered elliptic, elasticity, and Stokes equations and only report
the results for elasticity and Stokes equations. Our results for
the offline consist of adding multiscale basis functions where we
observe that the error decreases as we increase the number
of basis functions.
However, the errors (especially those involving solution gradients)
can still be large.
For this reason, online basis functions are added, which can rapidly reduce the
error.
We summarize some of our quantitative
 results  below.
\begin{itemize}

\item
 For elasticity equations without adaptivity, we observe that, with
using $4$ offline basis functions per coarse neighborhood,
we can achieve $7.4$ \% error in $L^2$ norm, while the error
is $26$ \% in $H^1$ norm. The results for the offline computations
are similar for two different geometries.

\item
For Stokes equations without adaptivity, we observe that, with
using $3$ offline basis functions per coarse block,
we can achieve $0.94$ \% error in $L^2$ norm, while the error
is $8.8$ \% in $H^1$ norm. All errors are for the velocity field.
 The results for the offline computations
are better for the case with many inclusions.

\item
 For online simulations, we observe that the error decreases rapidly as
we add one online basis functions. The error keeps decreasing fast as
we increase the number of online basis functions; however, we are mostly
interested in error decay when one basis function is added. We observe
that the error decrease much faster if we have more than $1$ initial
offline basis function. For example, the error decreases only
$4$ times if one basis function is chosen, while the error
decreases more than $10$ times if $4$ initial basis functions
are selected (see Table \ref{tab:st-sh} and \ref{tab:st-ex} for the Stokes case
and second geometry).

\item
We observe that one can effectively use adaptivity to
reduce the computational cost in the online simulations.
Our adaptive results show that we can achieve better accuracy
for the same number of online basis functions.

\end{itemize}

The paper is organized as follows. In Section \ref{prelim},
we present a general setting for perforated problems,
the coarse and fine grid definitions, and a general idea of the GMsFEM.
In Section \ref{Construction},
we discuss constructing offline and online basis
functions. Section \ref{numerical} is devoted to numerical results. In Section \ref{Analysis},
we present the convergence analysis for the offline and
online GMsFEM. The conclusions are presented in Section \ref{conclusion}.

\section{Preliminaries}
\label{prelim}

\subsection{Problem setting}

In this section, we present the underlying problem as stated in \cite{CELV2015,mixed-perforated}
and the corresponding fine-scale and coarse-scale discretization.
Let $\Omega\subset \mathbb{R}^d$ ($d=2,3$) be a bounded domain covered
by inactive cells (for Stokes flow and Darcy flow) or active cells (for elasticity problem) $\mathcal {B}^{\eps}$.  
In the paper, we will consider $d=2$ case, though our results can be extended
to $d>2$.
We use the superscript $\eps$ to denote quantities related to perforated domains.
The active cells are where the underlying problem is solved, while inactive cells are the rest of the region.
Suppose the distance between inactive cells (or active cells) is
of order $\eps$. Define $\Omega^{\eps}:=\Omega\backslash \mathcal {B}^{\eps}$, assume it is polygonally bounded. See Figure \ref{fig:perf_domain} for an illustration of the perforated domain.
We consider the following problem defined in a perforated domain $\Omega^{\eps}$
\begin{align} \label{eq:original}
&\mathcal {L}^{\eps}(w)=f, \quad \text{in} \quad \Omega^{\eps},&\\
&w=0 \text{ or } \frac{\partial w}{\partial n}=0, \text{ on }\partial \Omega^{\eps}\cap  \partial \mathcal {B}^{\eps}, \label{eq:o2} &\\
&w=g, \text{ on }\partial \Omega\cap \partial \Omega^{\eps}, \label{eq:o3} &
\end{align}
where $\mathcal {L}^{\eps}$ denotes a linear differential operator, $n$ is the unit outward normal to the boundary, $f$ and $g$ denote given functions with sufficient regularity.

\begin{figure}[htb]
  \centering
  \includegraphics[width=0.3 \textwidth]{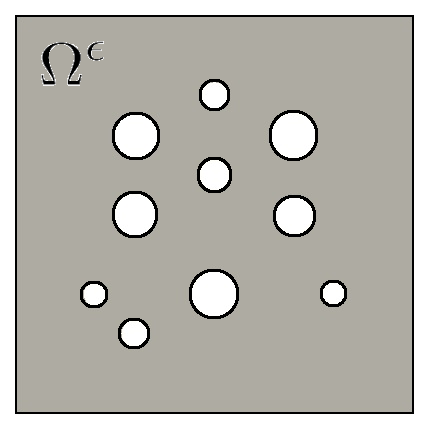}
  \caption{Illustration of a perforated domain.}
  \label{fig:perf_domain}
\end{figure}

Denote by ${V}(\om^{\eps})$ the appropriate solution space, and
  \[{V}_{0}(\om^{\eps})=\{v\in {V}(\om^{\eps}), v=0 \text{ on }\partial\om^{\eps}\}.\]
The variational formulation of Problem \eqref{eq:original}-\eqref{eq:o3} is to find $w\in {V}(\om^{\eps})$ such that
\[
\avrg{\mathcal {L}^{\eps}(w),v}{\om^{\eps}}= (f,v)_{\om^{\eps}} \qquad  \text{for all } v \in V_{0}(\om^{\eps}),
\]
where $\avrg{\cdot,\cdot}{\om^{\eps}}$ denotes a specific for the application inner product over $\om^{\eps}$ for either scalar functions or vector functions, and 
and $(f,v)_{\om^{\eps}}$ is the $L^2$ inner product.
Some specific examples for the above abstract notations are given below.

\textbf{Laplace:} For the Laplace operator with homogeneous Dirichlet boundary conditions on $\partial \om^{\eps}$, we have
\begin{align}\label{eqn:laplace}
\mathcal {L}^{\eps}(u)=-\Delta u,
\end{align}
and ${V}(\om^{\eps})=H^{1}_{0}(\Omega^{\eps})$, $\avrg{\mathcal {L}^{\eps}(u),v}{\om^{\eps}}=(\nabla u,\nabla v)_{\om^{\eps}}$.

\textbf{Elasticity:} For the elasticity operator with a
homogeneous Dirichlet boundary condition on $\partial \om^{\eps}$, we assume the medium is isotropic.
Let $ {u}\in (H^{1}(\Omega^{\eps}))^{2}$ be the displacement field.
The strain tensor $ {\strain}( {u})\in (L^{2}(\Omega^{\eps}))^{2\times 2}$
is defined by
\begin{equation*}
 {\strain }( {u}) = \frac{1}{2} ( \nabla  {u} + \nabla  {u}^T ).
\end{equation*}
Thus, the stress tensor $ {\sigma}( {u})\in (L^{2}(\Omega^{\eps}))^{2\times 2}$ relates to the strain tensor $ {\strain}( {u})$ such that
\begin{equation*}
 {\sigma}(u)= 2\mu  {\strain} + \xi \nabla\cdot  {u} \,  {I},
\end{equation*}
where $\xi >0$ and $\mu>0$ are the Lam\'e coefficients. We have
\begin{align}\label{eqn:elasticity}
\mathcal {L}^{\eps}(u)=- \nabla \cdot  {\sigma},
\end{align}
where ${V}(\om^{\eps})=(H^{1}_{0}(\Omega^{\eps}))^{2}$ and
$\avrg{\mathcal {L}^{\eps}(u),v}{\om^{\eps}}
=2\mu(\strain(u),\strain(v))_{\om^{\eps}}+\xi (\nabla\cdot u,\nabla\cdot v)_{\om^{\eps}}$.

\textbf{Stokes:} For Stokes equations, we have
\begin{align}\label{eqn:Stokes}
\mathcal {L}^{\eps}(u\;,p)=\begin{pmatrix}
\nabla p -\Delta {u}\\
\nabla \cdot {u}
\end{pmatrix},
\end{align}
where $\mu$ is the viscosity, $p$ is the fluid pressure, $u$ represents the velocity, ${V}(\om^{\eps})=(H^{1}_{0}(\Omega^{\eps}))^{2}\times L^{2}_{0}(\Omega^{\eps})$, and
\[\avrg{\mathcal {L}^{\eps}(u\;,p),(v\;,q)}{\om^{\eps}}=
\begin{pmatrix}
(\nabla u,\nabla v)_{\om^{\eps}} &-(\nabla \cdot v,p)_{\om^{\eps}}\\
(\nabla \cdot u,q)_{\om^{\eps}}&0
\end{pmatrix}.\]
We recall that $L^{2}_{0}(\Omega^{\eps})$ contains functions in $L^{2}(\Omega^{\eps})$
with zero average in $\Omega^{\eps}$.

In this paper, we will show the results for elasticity and Stokes equations.
The results for Laplace have similar convergence analysis and computational results as those for elasticity equations, so we will omit them here.

\subsection{Coarse and fine grid notations}

For the numerical approximation of the above problems, we first introduce the notations of fine and coarse grids.
Let $\mathcal{T}^H$ be a coarse-grid partition of the domain
$\Omega^{\eps}$ with mesh size $H$. Here, we assume that the perforations will not split the coarse triangular element,
as in this case, the coarse block will have two disconnected regions. In general, the proposed
concept can be applied to this disconnected case; however, for simplicity, we avoid it and assume that 
every coarse-grid block is path-connected (i.e., any two points can be connected within the coarse block).
Notice that, the edges of the coarse elements do not necessarily have straight edges
because of the perforations (see Figure~\ref{fig：nbhd_def}).
By conducting a conforming
refinement of the coarse mesh $\mathcal{T}^H$, we can obtain
 a fine mesh $\mathcal{T}^h$ of $\Omega^{\eps}$ with mesh size $h$.
Typically, we assume that $0 < h \ll H < 1$, and that the fine-scale mesh $\mathcal{T}^h$
is sufficiently fine to fully resolve the small-scale information of the domain, and $\mathcal{T}^H$ is a coarse mesh containing many fine-scale features.
Let $N_v$ and $N_e$ be the number of nodes and edges in coarse grid respectively.
We denote by $\{x_i|1\leq i \leq N_v\}$ the set of coarse nodes, and $\{E_j| 1\leq j\leq N_e \}$ the set of coarse edges.

\begin{figure}[!h]
  \centering
  \includegraphics[width=0.6 \textwidth]{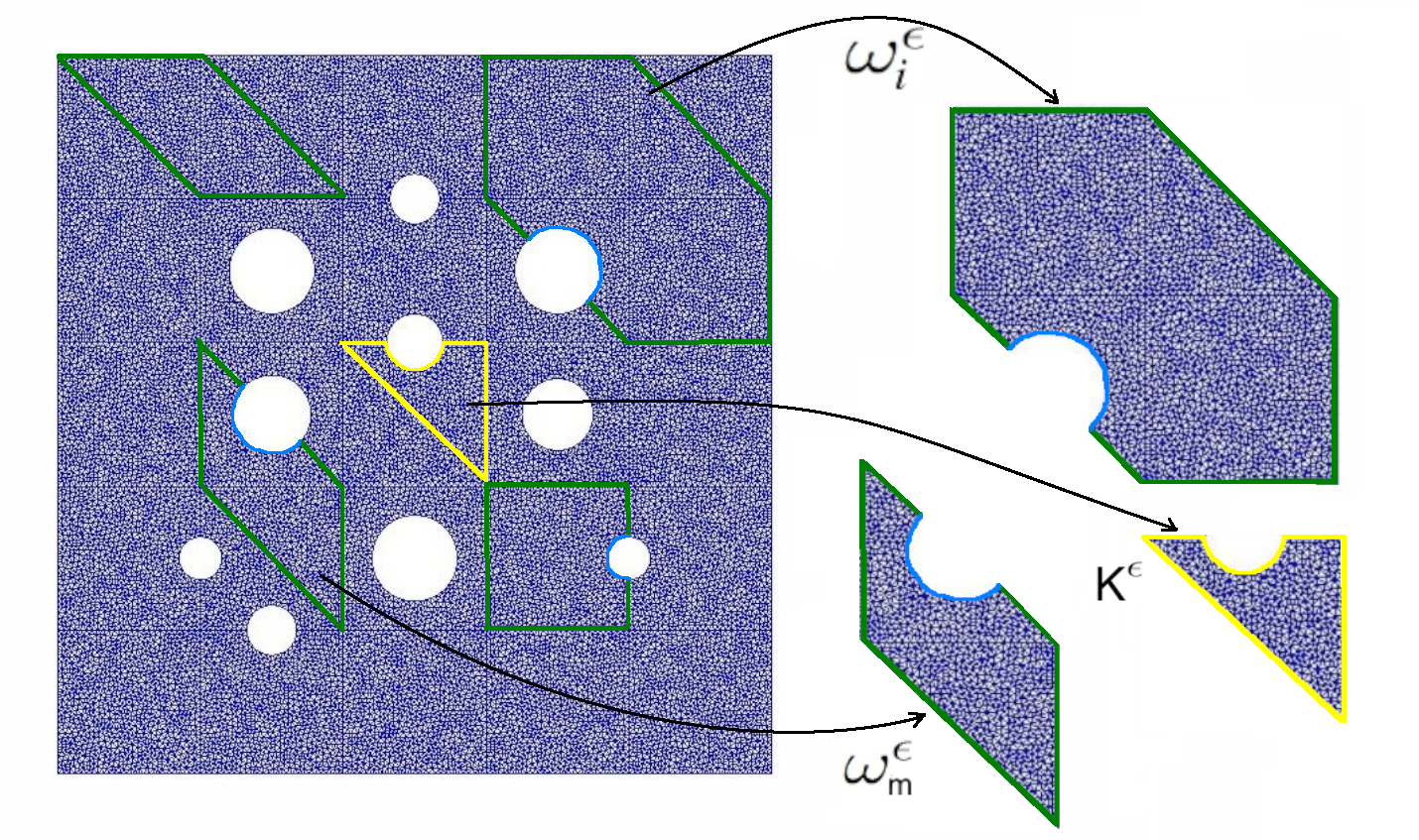}
  \caption{Illustration of coarse elements and coarse neighborhoods.}
  \label{fig：nbhd_def}
\end{figure}

For all the three model problems, we define a coarse neighborhood $\omega_i^{\eps}$ for each coarse node $x_i$ by
\begin{equation}
\label{eq:n1}
\omega_i^{\eps} = \cup\{K_j^{\eps}\in \mathcal{T}^H; ~~x_i\in \bar{K_j^{\eps}} \},
\end{equation}
which is the union of all coarse elements having the node $x_i$.
For the Stokes problem, additionally, we define a coarse neighborhood $\omega_m^{\eps}$ for each coarse edge $E_m$ by
\begin{equation}
\label{eq:n2}
\omega_m^{\eps} = \cup\{K_j^{\eps}\in \mathcal{T}^H; ~~E_m\in \bar{K_j^{\eps}} \},
\end{equation}
which is the union of all coarse elements having the edge $E_m$.
See Figure \ref{fig：nbhd_def} for an illustration of the coarse neighborhoods.

On the triangulation $\mathcal{T}^h$,
we introduce the following finite element spaces
\begin{align*}
{V}_h  &:= \{ {v} \in V(\Omega^{\eps}):\quad {v}|_K \in (P^k(K))^l
\mbox{ for all } K \in \mathcal{T}^h \},
\end{align*}
where, $P^k$ denotes the polynomial of degree $k$( $k=0,\;1,\;2$), and $l$( $l=1,\;2$)
indicates either a scalar or a vector.
Note that for the Laplace and elasticity operators, we choose $k=1$, i.e., piecewise linear function space as our fine-scale approximation space; for Stokes problem, we use $(P^2(K))^2$ for fine-scale velocity approximation and $P^0(K)$ for fine-scale pressure approximation.
We use $Q_h$ to denote the space for pressure.

We will then obtain the fine-scale solution $u\in V_h$ by solving the following variational problem
\begin{align}\label{eq:fine_system}
\avrg{\mathcal {L}^{\eps}(u),v}{\om^{\eps}}= (f,v)_{\om^{\eps}}, \qquad  \text{for all } v \in V_h
\end{align}
for Laplace and elasticity, and
obtain the fine-scale solution $(u,p)\in V_h \times Q_h$ by solving the following variational problem
\begin{align}\label{eq:fine_system1}
\avrg{\mathcal {L}^{\eps}(u,p),(v,q)}{\om^{\eps}}= ((f,0),(v,q))_{\om^{\eps}}, \qquad  \text{for all } (v,q) \in V_h \times Q_h
\end{align}
for the Stokes system. These solutions are used as reference solutions to test the performance of our schemes.

\subsection{General idea of GMsFEM}\label{GMsFEM}

Now, we present the general idea of GMsFEM
\cite{GMsFEM13,hw97, chung2015residual}. We divide the computations into
offline and online stages.

\textbf{Offline stage.} The construction of offline space usually contains two steps:
\begin{itemize}
\item Construction of a snapshot space that will be used to compute an offline space.
\item Construction of a small dimensional offline space by performing a dimension reduction in the snapshot space.
\end{itemize}

From the above process, we will get a set of basis functions $\{\Psi_i^{\text{off}}\}$ such that
each $\Psi_i^{\text{off}}$ is supported in some coarse neighborhood $w_l^{\eps}$.
Also, the basis functions satisfy a partition of unity property.

Once the bases are constructed, we define the coarse function space as
$${V}_{\text{off}} := \mbox{span}\{{\Psi}_i^{\text{off}}\}_{i=1}^{M},$$
where $M$ is the number of coarse basis functions.

In the offline stage of GMsFEM, we seek an approximation ${u}_{\text{ms}}=\sum_{i=1}^{M} c_i{\Psi}_i^{\text{off}}$ in $V_{\text{off}}$,
 which satisfies the coarse-scale offline formulation,
\begin{align}\label{eq:coarse_system}
\avrg{\mathcal {L}^{\eps}(u_{\text{ms}}),v}{\om^{\eps}}= (f,v)_{\om^{\eps}}, \qquad  \text{for all } v \in V_{\text{off}}.
\end{align}
Here, the bilinear forms $\avrg{\mathcal {L}^{\eps}(u_{\text{ms}}),v}{\om^{\eps}}$ are as defined before,
and $(f,v)_{\om^{\eps}}$ is the $L^2$ inner product.

\textbf{Online stage.} Now, we will turn our attention to the online computation.
 At the enrichment level $m$,
denote by ${V}_{\text{ms}}^m$ and ${u}_{\text{ms}}^m$ the corresponding GMsFEM space and solution, respectively.
The online basis functions are constructed based on the residuals of the current multiscale solution ${u}_{\text{ms}}^m$.
To be specific, one can compute the local residual $R_i=(f,v)_{\omega_i^{\eps}}-\avrg{\mathcal {L}^{\eps}(u_{\text{ms}}^m),v}{\omega_i^{\eps}}$ in each coarse neighborhood $\omega_i^{\eps}$. For the coarse neighborhoods where the residuals are large,
we can add one or more basis functions by solving
\[
\mathcal {L}^{\eps}(\phi_i^{\text{on}}) = R_i.
\]

 Adding the online basis in the solution space, we will get a new coarse function space $V_{\text{ms}}^{m+1}$.
The new solution $u_{\text{ms}}^{m+1}$ will be found in this approximation space.
This iterative process is stopped when some error tolerance is achieved.
The accuracy of the GMsFEM relies on the coarse basis functions.
We shall present the construction of suitable basis functions in both offline and online stages for the differential operators defined above.

\section{The construction of offline and online basis functions}
\label{Construction}

In this section, we describe the construction of offline and online basis for elasticity problem and Stokes problem.

In the offline computation, we first construct a snapshot space $V_{\text{snap}}^{i}$ for each coarse neighborhood $\omega_i^{\eps}$.
Construction of the snapshot space involves solving the local problems for various choices of input parameters.
The offline space $V_{\text{off}}$ is then constructed via a dimension reduction in the snapshot space using an auxiliary
spectral decomposition. The main objective is to seek a subspace of the snapshot space such that it can approximate
any element of the snapshot space in an appropriate sense defined via auxiliary bilinear forms.
Based on the residual of the current solution, we enrich the solution space by adding some online functions to enhance the accuracy of the solution.
The precise construction of offline and online basis will be presented for different applications.

\subsection{Elasticity Problem}

In this section, we will consider the elasticity problem \eqref{eqn:elasticity} with a homogeneous Dirichlet boundary condition.

\subsubsection{Snapshot Space}
The snapshot space for elasticity problem consists of extensions of the fine-grid functions $\delta_k^h$ in $\omega_i^{\eps}$.
Here $\delta_k^h = 1$ at the fine node $x_k\in \partial{\omega_i^{\eps}}\backslash\partial \mathcal {B}^{\eps}$, $\delta_k^h = 0$ at other fine nodes $x_j\in \partial{\omega_i^{\eps}}\backslash \partial \mathcal {B}^{\eps}$,  and $\delta_k^h = 0$ in $\partial \mathcal {B}^{\eps}$.
Let $V_h^i$ be the restriction of the fine grid space $V_h$ in $\omega_i^{\eps}$
and $V_{h,0}^i \subset V_h^i$ be the set of functions that vanish on $\partial\omega_i^{\eps}$.
We will find $u_k^i \in V_h^i$ with $supp(u_k^i) \subset \omega_i^{\eps}$ by solving the following problems on a fine grid
\begin{equation}\label{hrmext-elas}
\int_{\omega_i^{\eps}} \Big( 2\mu \strain(u_k^i):\strain(v)+\xi \nabla\cdot u_k^i\nabla\cdot v \Big)dx = 0,  \quad \forall v \in V^i_{h,0},\\
\end{equation}
with boundary conditions
\[u_k^i = 0~~ \text{on}~~  \partial{\omega_i^{\eps}}\cap \partial\mathcal {B}^{\eps},\ \ \ \ u_k^i = (\delta_j^{i},0) ~~\text{or}~~(0,\delta_j^{i})~~ \text{on} ~~ \partial{\omega_i^{\eps}}.
\]
We will collect the solutions of the above local problems to
generate the snapshot space. Let $\psi_{k}^{i,\text{snap}}:= u_k^i$ and define the snapshot space by
\[
V_{\text{snap}} = \text{span}\{\psi_{k}^{i,\text{snap}}:\ \ 1\leq k \leq J_i, \, 1\leq i \leq N_v\},
\]
where $J_i$ is the number of snapshot basis in $\omega_i^{\eps}$, and $N_v$ is the number of nodes.
To simplify notations, let $M_{\text{snap}} = \sum_{i=1}^{N}J_i$ and write
\[
V_{\text{snap}} = \text{span}\{\psi_{i}^{\text{snap}}:\ \ 1\leq i \leq M_{\text{snap}}\}.
\]

\subsubsection{Offline space}

This section is devoted to the construction of the offline space via a  spectral decomposition.
We will consider the following eigenvalue problems in the space of snapshots:
\begin{equation}
A^{i,\text{off}} \Psi_k^{i,\text{off}} = \lambda_k^{i,\text{off}} S^{i,\text{off}}\Psi_k^{i,\text{off}},  \label{offeig1}
\end{equation}
where
\begin{equation}
\begin{split}
 A^{i,\text{off}}&= a_i(\psi_m^{i,\text{snap}} , \psi_n^{i,\text{snap}} )= \int_{\omega_i^{\eps}}
\Big( 2\mu  {\strain}( \psi_m^{i,\text{snap}}) :  {\strain}( \psi_n^{i,\text{snap}})
+ \xi \nabla\cdot  {\psi_m^{i,\text{snap}}} \, \nabla\cdot  \psi_n^{i,\text{snap}}
\Big) \;, \\
\displaystyle S^{i,\text{off}}&= s_i(\psi_m^{i,\text{snap}} , \psi_n^{i,\text{snap}} ) = \int_{\omega_i^{\eps}}  (\xi + 2 \mu)  \psi_m^{i,\text{snap}} \cdot  \psi_n^{i,\text{snap}}.
\end{split}
\end{equation}
We assume that the eigenvalues are arranged in the increasing order.
To simplify notations, we write $\lambda_k^i = \lambda_k^{i,\text{off}}$.

To generate the offline space, we choose the smallest $M_i$ eigenvalues from Equation~\eqref{offeig1} and form the corresponding eigenfunctions in the respective snapshot spaces  by setting
$\Phi_k^{i,\text{off}} = \sum_j \Psi_{kj}^{i,\text{off}} \psi_j^{i,\text{snap}}$, for $k=1,\ldots, M_i$, where $\Psi_{kj}^{i,\text{off}}$ are the coordinates of the vector $\Psi_{k}^{i,\text{off}}$.
The offline space is defined as the span of $\chi_i \Phi^{i,\text{off}}_k$, namely,
\[
V_{\text{off}} = \text{span} \{\chi_i \Phi_{l}^{i,\text{off}}:\ \ 1\leq l \leq l_i, \, 1\leq i \leq N_v \},
\]
where $l_i$ is the number of snapshot basis in $\omega_i^{\eps}$,
and $\{ \chi_i \}$ is a set of partition of unity functions for the coarse grid.
One can take $\{ \chi_i \}$ as the standard hat functions or standard multiscale basis functions.
To simplify notations further, let $M = \sum_{i=1}^{N}l_i$ and write
\[
V_{\text{off}} = \text{span}\{\chi_i\Phi_{i}^{\text{off}}:\ \ 1\leq i \leq M \}.
\]

\subsubsection{Online adaptive method} \label{sec:online_algorithm}

By the offline computation,  we construct multiscale basis functions that can  be used for any input parameters to solve the problem on the coarse grid. In the earlier works \cite{chung2015online, chung2015residual}, the online method for the diffusion equation with heterogeneous coefficients has been proposed. In this section, we consider the construction of the online basis functions for elasticity problem in perforated domains and present an adaptive enrichment algorithm.
We use the index $m \geq 1$ to represent the enrichment level. The online basis functions are computed based on some local residuals for the current multiscale solution $u_{\text{ms}}^m\in V^m_{\text{ms}}$, where we use $V^m_{\text{ms}}$ to denote the corresponding space that can contain both offline and online basis functions.

Let $V^{m+1}_{\text{ms}} = V^{m}_{\text{ms}} + \text{span} \{ \phi^{\text{on}} \}$ be the new approximate space that constructed by adding online basis $\phi^{\text{on}} \in {V}^i_{h,0}$ on the $i$-th coarse neighborhood $\omega_i^{\eps}$.
For each coarse grid neighborhood $\omega_i^{\eps}$, we define the residual $R_i$ as a linear functional on $V^i_{h,0}$ such that
\[
R_i(v) =  \int_{\omega_i^{\eps}} f v dx -
\int_{\omega_i^{\eps}} \Big(2\mu\strain(u^m_{\text{ms}}):\strain(v)+ \xi \nabla\cdot u^m_{\text{ms}}\nabla\cdot v\Big) dx,  \quad \forall v \in V^i_{h,0}.
\]
The norm of $R_i$ is defined as
\[
||R_i||_{(V^i_h)^*} = \sup_{v \in V^i_{h,0} } \frac{|R_i(v)|}{a_i(v,v)^{\frac{1}{2}}},
\]
where $a_i(v,v) = \int_{\omega_i^{\eps}} \Big(2\mu\strain(v):\strain(v)+ \xi \nabla\cdot v\nabla\cdot v\Big) dx$.

For the computation of this norm, according to the Riesz representation theorem, we can first compute $\phi^{\text{on}}$ as the solution of following problem
\begin{equation}
\label{online}
\int_{\omega_i^{\eps}} \Big(2\mu\strain(\phi^{\text{on}}):\strain(v)+ \xi \nabla\cdot \phi^{\text{on}}\nabla\cdot v\Big) dx = \int_{\omega_i^{\eps}} f v \, dx -
\int_{\omega_i^{\eps}} \Big(2\mu\strain(u^m_{\text{ms}}):\strain(v)+ \xi \nabla\cdot u^m_{\text{ms}}\nabla\cdot v\Big) dx , \quad \forall v \in V^i_{h,0}
\end{equation}
and take $||R_i||_{(V^i_h)^*} = a_i(\phi^{\text{on}},\phi^{\text{on}})^{\frac{1}{2}}$.

For the construction of the adaptive online basis functions,
we use the following error indicators to access the quality of the solution.
In those non-overlapping coarse grid neighborhoods $\omega_i^{\eps}$ with large residuals, we enrich the space by
finding online basis $\phi^{\text{on}} \in {V}^i_{h,0}$ using equation \eqref{online}.
%
\begin{itemize}
\item \textit{Indicator 1.} The error indicator based on local residual
\begin{equation}  \label{itm:ind1}
\eta_i =||R_i||^2_{(V^i_h)^*}
\end{equation}
\item \textit{Indicator 2.} The error indicator based on local residual and eigenvalue
\begin{equation}  \label{itm:ind2}
\eta_i = \left( \lambda^{\omega_i}_{l_i+1} \right)^{-1} ||R_i||^2_{(V^i_h)^*}
\end{equation}
\end{itemize}

Now we present the adaptive online algorithm.
We start with enrichment iteration number $m=0$ and choose $\theta \in (0, 1)$. Suppose the initial number of offline basis functions is $l_i^m$($m=1$) for each coarse grid neighborhood $\omega_i^{\eps}$, and the multiscale space is $V^m_{\text{ms}}$($m=1$). For $m=1,2,...$
\begin{itemize}
\item \textit{Step 1.} Find $u^m_{\text{ms}}$ in  $V^{m}_{\text{ms}}$ such that
\[
\begin{split}
\int_{\omega_i^{\eps}} \Big(2\mu\strain(u^m_{\text{ms}}):\strain(v)+ \xi \nabla\cdot u^m_{\text{ms}}\nabla\cdot v\Big) dx = \int_{\omega_i^{\eps}} fv, \quad \forall v \in V^m_{\text{ms}}.  \\
\end{split}
\]

\item \textit{Step 2.} Compute error indicators ($\eta_i$) for every coarse grid neighborhoods $\omega_i^{\eps}$ and sort them in decreasing order
$\eta_1 \geq \eta_2 \geq  ... \geq \eta_{N}$.

\item \textit{Step 3.} Select coarse grid neighborhoods $\omega_i^{\eps}$, where enrichment is needed. We take smallest $k$ such that
\[
\theta \sum_{i=1}^{N_v} \eta_i \leq \sum_{i=1}^{k} \eta_i.
\]

\item \textit{Step 4.} Enrich the space by adding online basis functions. For each $\omega_i^{\eps}$, where $i = 1,2,...,k$, we find $\phi^{\text{on}} \in V^i_{h,0}$ by solving \eqref{online}. The resulting space is denoted by $V^{m+1}_{\text{ms}}$.
\end{itemize}

We repeat the above procedure until the global error indicator is small or we have certain number of basis functions.

\subsection{Stokes problem}

In the above section, we presented the online procedure for the
elasticity equations.
In this section, we present the constructions of snapshot, offline and online basis functions for the Stokes problem.

\subsubsection{Snapshot space}

Snapshot space is a space which contains an extensive set of basis functions that are solutions of local problems with all possible boundary conditions up to fine-grid resolution. To get snapshot functions, we solve the following problem on the coarse neighborhood $\omega_i^{\eps}$: find $(u^i_l, p^i_l)$ (on a fine grid) such that
\begin{equation}
\label{eq:hrmext}
\begin{split}
\int_{\omega_i^{\eps}}   \nabla u^i_l  : \nabla v  dx
- \int_{\omega_i^{\eps}} p^i_l \div(v) dx &= 0,  \quad \forall v \in V^i_{h,0},\\
\int_{\omega_i^{\eps}} q \div(u^i_l)dx &= \int_{\omega_i^{\eps}} c  q dx,  \quad \forall q \in Q^i_h,
\end{split}
\end{equation}
with boundary conditions
\[
u^i_l  =  (0, 0), \text{ on } \partial \mathcal {B}^{\eps}, \quad
u^i_l  = (\delta^i_l, 0)  \text{ or } (0, \delta^i_l) ,  \text{ on } \partial \omega_i^{\eps} \backslash  \partial \mathcal {B}^{\eps},
\]
where function $\delta^i_l$ is a piecewise constant function such that  it has value $1$ on $e_l$ and value $0$ on other fine-grid edges.  Notice that $\omega_i^{\eps} \backslash  \partial \mathcal {B}^{\eps} = \cup_{l=1}^{S_i} e_l$, where $e_l$ are the fine-grid edges and $S_i$ is the number of these fine grid edges on $\omega_i^{\eps} \backslash  \partial \mathcal {B}^{\eps}$.
In (\ref{eq:hrmext}), we define $V_h^i$ and $Q_h^i$ as the restrictions of the fine grid space in $\omega_i^{\eps}$
and $V_{h,0}^i \subset V_h^i$ be functions that vanish on $\partial\omega_i^{\eps}$. Notice that $u^i_l$ and $p^i_l$ are supported in $\omega_i^{\eps}$.
We remark that the constant $c$ in \eqref{eq:hrmext} is chosen by compatibility condition, $c = \frac{1}{|\omega_i^{\eps}|} \int_{\partial \omega_i^{\eps} \backslash  \partial \mathcal {B}^{\eps} } u^i_l \cdot n_i \, ds$.
We emphasize that, for the Stokes problem, we will solve (\ref{eq:hrmext})
in both node-based coarse neighborhoods (\ref{eq:n1}) and edge-based coarse neighborhoods (\ref{eq:n2}).

The collection of the solutions of above local problems generates the snapshot space, $\psi_l^{i, \text{snap}} = u^i_l$ in $\omega_i^{\eps}$:
\[
V_{\text{snap}} = \{ \psi_l^{i, \text{snap}}: 1 \leq l \leq 2 S_i, \, 1 \leq i \leq (N_e + N_v) \},
\]
where we recall that $N_e$ is the number of coarse-grid edges and $N_v$ is the number of coarse-grid nodes.

\subsubsection{Offline Space}

We perform a space reduction in the snapshot space through the use of a local spectral problem in $\omega_i^{\eps}$.  The purpose of this is to determine the dominant modes in the snapshot space and to obtain a small dimension space for the approximation the solution.

We consider the following local eigenvalue problem in the snapshot space
\begin{equation}\label{eq:offeig2}
A^{i,\text{off}} \Psi_k =
\lambda_k^{i,\text{off}} S^{i,\text{off}} \Psi^{i,\text{off}}_k,
\end{equation}
where
\[
 A^{i,\text{off}}
= a_i(\psi_m^{i,\text{snap}} , \psi_n^{i,\text{snap}} )
\]\[
S^{i,\text{off}}
= s_i(\psi_m^{i,\text{snap}} , \psi_n^{i,\text{snap}} )
\]
and
\[
a_i(u, v) = \int_{\omega_i^{\eps}} \nabla u : \nabla v dx, \quad \text{ and } \quad
s_i(u, v) = \int_{\omega_i^{\eps}} |\nabla \chi_i |^2 u \cdot v \, dx
\]
and $\chi_i$ will be specified later.
Note that the above spectral problem is solved in the local snapshot space
corresponding to the neighborhood domain $\omega_i^{\eps}$.
We arrange the eigenvalues in the increasing order, and
choose the first $M_i$ eigenvalues and take the corresponding eigenvectors
$\Psi_k^{i,\text{off}}$, for $k = 1,2,...,M_i$,
to form the basis functions, i.e.,
$\widetilde{\Phi}_k^{i,\text{off}} = \sum_j \Psi_{kj}^{i,\text{off}} \psi_j^{i,\text{snap}}$, where $\Psi_{kj}^{i,\text{off}}$ are the coordinates of the vector $\Psi_{k}^{i,\text{off}}$.
We define
\[
\widetilde{V}^i_{\text{off}} = \text{span} \{ \widetilde{\Phi}_k^{i,\text{off}}, ~~k = 1,2,..., 2S_i \}. \numberthis\label{locofft}
\]

For construction of conforming offline space, we need to multiply the functions $\widetilde{\Phi}_k^{i,\text{off}}=(\widetilde{\Phi}_{x_1, k}^{i,\text{off}},\widetilde{\Phi}_{x_2, k}^{i,\text{off}})$ by a partition of unity function $\chi_i$.
We remark that the partition of unity functions $\{ \chi_i\}$ are defined with respect to the coarse nodes
and the mid-points of coarse edges. One can choose $\{ \chi_i\}$ as the standard multiscale finite element basis.
However, upon multiplying by partition of unity functions, the resulting basis functions do not have constant divergence any more,
which affects the stability of the scheme.
To resolve this problem, we solve two local optimization problems in every coarse element $K^i_j \subset \omega_i^{\eps}$:
\begin{equation}\label{hrm-x}
\min{ \norm{\nabla \Phi_{x_1, k}^{i,\text{off}}}_{L^2(K^i_j)} } \text{~~such that~~}
\div( \Phi_{x_1, k}^{i,\text{off}})  =  \frac{1}{|K^i_j|} \int_{\partial K^i_j }  (\chi_i \widetilde{\Phi}_{x_1, k}^{i,\text{off}} , 0) \cdot n_i \, ds, \quad \text{in } K^i_j
\end{equation}
with $\Phi_{x_1, k}^{i,\text{off}}  =   (\chi_i \widetilde{\Phi}_{x_1, k}^{i,\text{off}} , 0), \text{ on } \partial K^i_j$,
and
\begin{equation}\label{hrm-y}
\min{ \norm{\nabla \Phi_{x_2, k}^{i,\text{off}}}_{L^2(K^i_j)} } \text{~~such that~~}
\div( \Phi_{x_2, k}^{i,\text{off}})  =  \frac{1}{|K^i_j|} \int_{\partial K^i_j }  (0, \chi_i \widetilde{\Phi}_{x_2, k}^{i,\text{off}} ) \cdot n_i \, ds \quad \text{in } K^i_j,
\end{equation}
with $\Phi_{x_2, k}^{i,\text{off}}  =    (0, \chi_i \widetilde{\Phi}_{x_2, k}^{i,\text{off}} ), \text{ on } \partial K^i_j$.
We write that $ \Phi_{x_1, k}^{i,\text{off}} = \mathcal{H}(\chi_i \widetilde{\Phi}_{x_1, k}^{i,\text{off}} )$ and $ \Phi_{x_2, k}^{i,\text{off}} = \mathcal{H}(\chi_i \widetilde{\Phi}_{x_2, k}^{i,\text{off}} )$, where $\mathcal{H}(v)$ is the {\it Stokes extension} of the function $v$.

Combining them, we obtain the global offline space:
\[
V_{\text{off}}  = \text{span} \{
\Phi_{x_1, k}^{i,\text{off}} \text{ and } \Phi_{x_2, k}^{i,\text{off}}: \quad
1 \leq i \leq (N_e+N_v)  \text{  and  }   1 \leq k \leq M_i
\}.
\]
Using a single index notation, we can write
\[
V_{\text{off}} = \text{span} \{ \Phi^{\text{off}}_{i} \}_{i=1}^{N_u},
\]
where $N_u =\sum_{i=1}^{N_e+N_v} M_i$.
This space will be used as the approximation space for the velocity.
For coarse approximation of pressure, we will take $Q_{\text{off}}$ to be the space of piecewise constant functions
on the coarse mesh.

\subsubsection{Online Adaptive Method}

Similar to Section \ref{sec:online_algorithm}, we will define the online velocity basis for Stokes problem.
For each coarse grid neighborhood $\omega_i^{\eps}$, we define the residual $R_i$ as a linear functional on $V^i$ such that
\begin{equation}\label{stokes-resid}
R_i(v) =  \int_{\omega_i^{\eps}}  f\cdot v \, dx -
\int_{\omega_i^{\eps}} \nabla u^m_{\text{ms}} : \nabla v dx +
\int_{\omega_i^{\eps}} p^m_{\text{ms}} \div(v) dx, \quad \forall v \in V^i
\end{equation}
where $(u^m_{\text{ms}}, p^m_{\text{ms}})$ is the multiscale solution at the enrichment level $m$, and $V^i = (H^1_0(\omega_i^{\eps}))^2$.
The norm of $R_i$ is defined as
\begin{equation}
\label{eq:Rn1}
||R_i||_{(V^i)^*} = \sup_{v \in V^i } \frac{|R_i(v)|}{\|v\|_{H^1(\omega_i^{\eps})}}.
\end{equation}
We will then use indicators (\ref{itm:ind1}) and (\ref{itm:ind2}) for our adaptive enrichment method.
For the computation of online basis $\phi_i^{\text{on}} \in V^i_{h,0}$, we solve the following problem
\begin{equation}
\label{online_Stokes}
\begin{split}
\int_{\omega_i^{\eps}} \nabla \phi_i^{\text{on}}: \nabla v dx -
\int_{\omega_i^{\eps}} p^{\text{on}} \div(v)  dx &= R_i(v), \quad \forall v \in V^i_{h,0}, \\
 \int_{\omega_i^{\eps}} \div(\phi_i^{\text{on}}) \, q \, dx &= 0, \quad \forall q \in Q_{\text{off}}.
\end{split}
\end{equation}
The adaptivity procedure follows the one presented in Section \ref{sec:online_algorithm}.



\subsection{Randomized snapshots}\label{random}

In the above construction, the local problems are solved for every bounday node. This procedure is expensive and may not be 
practical. However, one can use the idea of randomized snapshots (as in \cite{randomized2014}) and reduce the
cost substantially. In randomized snapshots, one computes a few more snapshots compared to the required number of
multiscale basis functions. E.g., we compute $n+4$ snapshots for $n$ multiscale basis functions.

To be more specific, we first generate inexpensive snapshots using random boundary conditions. Instead of solving the local problem \eqref{hrmext-elas} and \eqref{eq:hrmext} for each fine boundary degree of freedom, we solve a small number of local problems with boundary conditions:
\begin{align*}
 u_k^{+, i} &=  (r_l^i, 0) \quad \text{or} \quad (0, r_l^i) \quad \text{on} \quad \partial \omega_i^{+, \eps} \backslash  \partial \mathcal{B}^{\eps},\\
 u_k^{+, i} &=  (0, 0)  \quad \text{on} \quad  \partial \mathcal{B}^{\eps}.
\end{align*}

Here $ r_l^i$ are independent identically distributed (i.i.d.) standard Gaussian random vectors defined on the fine degree freedom of the boundary. Notice that we will solve for $u_k^{+, i}$ in a larger domain, the oversampling domian $\omega_i^{+, \eps}$. The oversampling technique is used avoid the effects of randomized boundaries. 
After removing dependence, we finally get our snapshot basis by taking the restriction of  $u_k^{+, i}$ in $\omega_i^{\eps}$, i.e, $u_k^i= u_k^{+, i}|_{\omega_i^\eps}$.

In Section \ref{numerical}, we will take the Stokes problem as an example and show the numerical results for randomized sanpshots. 

\section{Numerical results}
\label{numerical}

In this section, we show simulation
results using the framework of online adaptive GMsFEM presented
in Section \ref{GMsFEM}
for elasticity equations and Stokes equations. We set $\Omega = [0,1] \times [0,1]$ and use two types of perforated domains as illustrated in Figure \ref{fig:twodomain}, where the perforated regions $\mathcal {B}^{\eps}$ are circular. We have also used perforated regions of other shapes instead and obtained similar results.  The computational domain is discretized coarsely using uniform triangulation, where the coarse mesh size $H=\frac{1}{10}$ for elasticity problem and $H=\frac{1}{5}$ for Stokes problem. Furthermore, nonuniform triangulation is used inside each coarse triangular element to obtain a finer discretization. Examples of this triangulation are displayed also in Figure \ref{fig:twodomain}.

First we will choose a fixed number of offline basis (initial basis) for every coarse neighborhood, and obtain corresponding offline space $V_\text{off}$, which is also denoted by $V_{\text{ms}}^1$. Then, we perform the online iterations on non-overlapping coarse neighborhoods to obtain enriched space $V_{\text{ms}}^m$, $m\geq 1$. We will add online basis both with adaptivity and without adaptivity and compare the results. All the errors are in percentage. We note that our approaches are designed to
explore the sparsity and the adaptivity 
in the solution space and our main emphasis is on the construction
of coarse spaces. Our numerical results will show the approximation of
the fine-scale solution for different dimensional coarse spaces.

\begin{figure}
\centering
\includegraphics[width=0.8\linewidth]{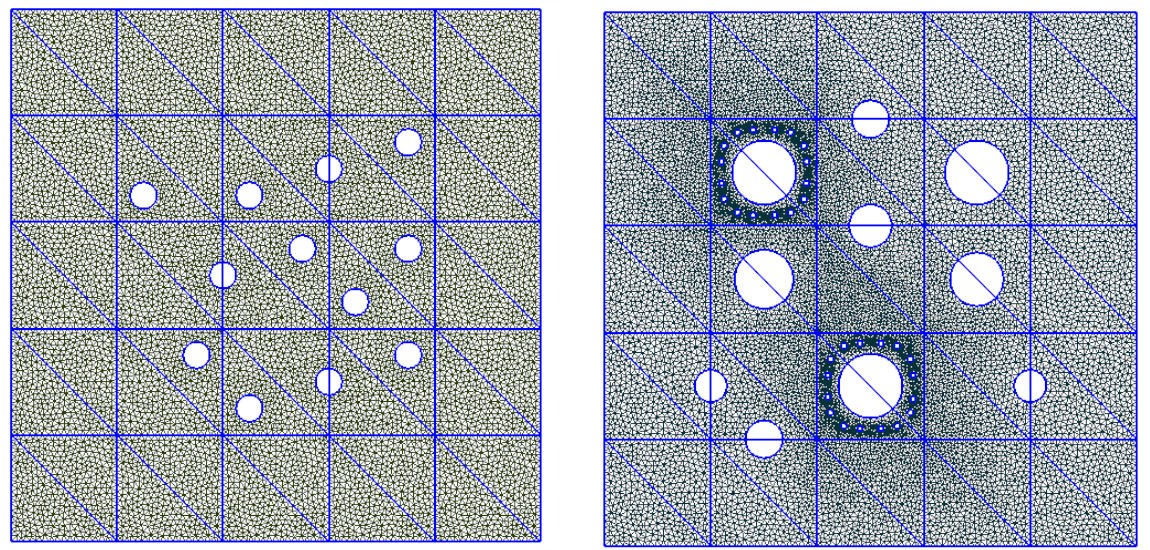}
\caption{Two heterogeneous perforated medium used in the simulations.}
\label{fig:twodomain}
\end{figure}

\subsection{Elasticity equations in perforated domain}

We consider the elasticity operator \eqref{eqn:elasticity}.
We use zero displacements $u =0$ on the inclusions,
$u_1 = 0, \sigma_2 = 0$ on the left boundary,
$\sigma_1 = 0, u_2 = 0$  on the bottom boundary and
$\sigma_1 = 0, \sigma_2 = 0$ on the right and top boundaries.
Here, $u = (u_1, u_2) $ and $\sigma = (\sigma_1, \sigma_2) $.
The source term is defined by $f = (10^7, 10^7)$,
the elastic modulus is given by $E = 10^9$, Poisson's ratio is $\nu = 0.22$, where
\[
\mu = \frac{E}{2 (1 + \nu)}, \quad
\xi = \frac{E \nu}{(1+ \nu) ( 1- 2 \nu)}.
\]
We use the following error quantities to measure the performance of the online adaptive GMsFEM
\[
||e||_{L^2} = \norm{e_u}_{L^2(\om^{\eps})} =
\frac{ \norm{(\xi +2\mu)(u - u_{\text{ms}})}_{L^2(\om^{\eps})} } {\norm{(\xi +2\mu)u}_{L^2(\om^{\eps})}} , \quad
||e||_{H^1} = \norm{e_u}_{H^1(\om^{\eps})} =
\sqrt{ \frac{\avrg{\mathcal {L}^{\eps}(u - u_{\text{ms}}),u - u_{\text{ms}}}{\om^{\eps}}} {\avrg{\mathcal {L}^{\eps}(u),u}{\om^{\eps}}}  },
\]
where $u$ and $u_{\text{ms}}$ are the fine and coarse solutions, respectively, and
$\avrg{\mathcal {L}^{\eps}(u),v}{\om^{\eps}}
=2\mu\avrg{\strain(u),\strain(v)}{\om^{\eps}}+\xi \avrg{\nabla\cdot u,\nabla\cdot v}{\om^{\eps}}$. Note that the reference solution $u$ needs a full fine scale computation. The fine grid DOF is 13262 for the domain with small perforations(left in Figure \ref{fig:twodomain}) and 21986 for the domain with big perforations (right in Figure \ref{fig:twodomain}). 

The fine-scale solution and coarse-scale solution corresponding to the two different perforated domains in Figure \ref{fig:twodomain} are presented in Figures \ref{fig:el1} and \ref{fig:el2}. Fine solutions are shown on the left of the figure, coarse offline solutions are in the middle and online solutions are on the right. In Tables \ref{tab:el_sh} and \ref{tab:el_ex}, we present the convergence history when the problem is solved in two different perforated domain with one, two and four initial bases in the left, middle and right column, respectively. Each column shows the error behavior when the online method is applied without adaptivity, with adaptivity using Indicator 1 (see \eqref{itm:ind1}) and with adaptivity using Indicator 2 (see \eqref{itm:ind2}).

Numerical results for the first perforated domain are displayed in Figure \ref{fig:el1}.  We  observe that the offline solution is close to the fine-scale solution; however, there are some missing features
in the offline solution. For example, the low values of the solution for a
connected regions around circular inclusions, while this is not the
case for the fine-scale solution. Also, we observe that the offline solution
does not capture the low values of the solution near the inclusions. On the other
hand, the solution using the online procedure with approximately the same
number of degrees of freedom as the offline solution has very good accuracy.
From Table \ref{tab:el_sh}, we observe that when using one initial basis, the $L^2$ and energy error reduce to $1.3$\% and $5.82$\% respectively after one online iteration in the case without adaptivity. However, if we select two initial bases, the the $L^2$ and energy error can be reduced to $0.567$\% and $2.92$\% respectively after one online iteration, which is almost half of the errors for one initial basis situation. When the number of basis is fixed, it shows that adding online basis can reduce the error more effectively than adding offline basis. For example, when we use two offline basis and two online basis, the energy error is $0.369$\%; while when we select four offline basis, the energy error is $26.703$\%.
Comparison of the error behavior between solving with and without adaptivity in this table shows that, error is smaller under the similar DOF when adaptive online method is applied. For example, if we start with one initial basis, the energy error is 5.482\% with DOF 500 when online method is applied without adaptivity, but the energy error becomes 2.589\% with DOF 536 when online adaptive method is applied.
When we solve with the adaptivity, we observe that the first indicator (see \eqref{itm:ind1}) is more effective when one initial basis is selected. However, if we start with two or four initial bases, the second indicator (see \eqref{itm:ind2}) gives us slightly better results.  The smallest eigenvalues are $\Lambda_{min} = 31.4, 79.9, 204.8$ when one, two and four initial basis are used.

In Figure \ref{fig:el2}, we test with a different perforated domain where the circular inclusions are larger compared to the domain in Figure \ref{fig:el1} and extremely small inclusions are set around some big ones. Comparing the offline and fine solution, we notice that some features of solution in the interior of the domain are missing, and the errors around the boundary are large. However, the online solution fix these problems well and show much better accuracy. Looking at Table \ref{tab:el_ex}, we observe that as we select more initial basis, the error decreases faster after one online iteration. For example, when one online iteration is applied without adaptivity, the $H^1$ error reduces $8.5$ times if we use one initial basis, yet it reduces around $12$ times if we use two initial basis. Considering the convergence behavior of online method with adaptivity against the online method without adaptivity, we see that the adaptivity is important. For instance, in a similar DOF of $1300$ in the case of four initial basis used, the $H^1$ error $10^{-5}$ without adaptivity, while it is only $10^{-6}$ with adaptivity.

\begin{table}[!htb]
\centering
  \begin{tabular}{ |c | c | c | }
    \hline
$DOF$& \multirow{2}{*}{$||e||_{L^2}$} &\multirow{2}{*}{$||e||_{H^1}$} \\
(\# iter)  &	& \\  \hline
\multicolumn{3}{|c|}{without adaptivity} \\ \hline
338  		&	29.269	&	 53.691	\\  \hline
500 (1)  	&	 1.300	&	  5.482	\\  \hline
662 (2)  	&	 0.082	&	  0.450	\\  \hline
824 (3)  	&	 0.010	&	  0.069	\\  \hline
986 (4)  	&	 0.0009&	  0.007	\\  \hline
\multicolumn{3}{|c|}{with adaptivity, $\eta^2_i = r^2_i$} \\ \hline
338  		&	29.269	&	 53.691	\\  \hline
510 (3)  	&	 0.567	&	  3.115	\\  \hline
654 (6)  	&	 0.042	&	  0.306	\\  \hline
852 (10)  	&	 0.001	&	  0.013	\\  \hline
1014 (13) &	 0.0001&	  0.0008	\\  \hline
\multicolumn{3}{|c|}{with adaptivity, $\eta^2_i = r^2_i \lambda_{i+1}^{-1}$} \\ \hline
338  		&	29.269	&	 53.691	\\  \hline
536 (4)  	&	 0.474	&	  2.589	\\  \hline
684 (7)  	&	 0.039	&	  0.285	\\  \hline
846 (10)  	&	 0.003	&	  0.023	\\  \hline
1002 (13) &	 0.0002&	  0.001	\\  \hline
\end{tabular}
$\;\;\;$
%
%
  \begin{tabular}{ |c | c | c | } \hline
$DOF$& \multirow{2}{*}{$||e||_{L^2}$} &\multirow{2}{*}{$||e||_{H^1}$} \\
(\# iter)  &	& \\  \hline
\multicolumn{3}{|c|}{without adaptivity} \\ \hline
412  		&	10.652	&	 32.862	\\  \hline
574 (1)  	&	 0.567	&	  2.921	\\  \hline
736 (2)  	&	 0.049	&	  0.369	\\  \hline
898 (3)  	&	 0.005	&	  0.047	\\  \hline
1060 (4)  	&	 0.0005&	  0.004	\\  \hline
\multicolumn{3}{|c|}{with adaptivity, $\eta^2_i = r^2_i$} \\ \hline
412  		&	10.652	&	 32.862	\\  \hline
584 (3)  	&	 0.416			&	  2.285	\\  \hline
740 (6)  	&	 0.029			&	  0.236	\\  \hline
932 (10)  	&	 0.001			&	  0.009	\\  \hline
1190 (15)	&	 1.685e-05	&	  0.0001	\\  \hline
\multicolumn{3}{|c|}{with adaptivity, $\eta^2_i = r^2_i \lambda_{i+1}^{-1}$} \\ \hline
412  		&	10.652	&	 32.862	\\  \hline
570 (3)  	&	 0.437			&	  2.519	\\  \hline
730 (6)  	&	 0.031			&	  0.252	\\  \hline
924 (10)  	&	 0.001			&	  0.009	\\  \hline
1072 (13)	&	 8.772e-05	&	  0.0006	\\  \hline
  \end{tabular}
  $\;\;\;$
%
%
  \begin{tabular}{ | c | c | c | }    \hline
$DOF$& \multirow{2}{*}{$||e||_{L^2}$} &\multirow{2}{*}{$||e||_{H^1}$} \\
(\# iter)  &	& \\  \hline
\multicolumn{3}{|c|}{without adaptivity} \\ \hline
648  		&	7.414	&	 26.703	\\  \hline
810 (1)  	&	 0.479	&	  2.509	\\  \hline
972 (2)  	&	 0.046	&	  0.368	\\  \hline
1134 (3)  	&	 0.004	&	  0.043	\\  \hline
1296 (4)  	&	 0.0005&	  0.004	\\  \hline
\multicolumn{3}{|c|}{with adaptivity, $\eta^2_i = r^2_i$} \\ \hline
648  		&	7.414	&	 26.703	\\  \hline
808 (3)  	&	 0.303	&	  1.977	\\  \hline
980 (6)  	&	 0.022	&	  0.192	\\  \hline
1144 (9)  	&	 0.001	&	  0.016	\\  \hline
1302 (12)	&	 0.0001&	  0.001	\\  \hline
\multicolumn{3}{|c|}{with adaptivity, $\eta^2_i = r^2_i \lambda_{i+1}^{-1}$} \\ \hline
648  		&	7.414	&	 26.703	\\  \hline
808 (3)  	&	 0.300			&	 1.776	\\  \hline
976 (6)  	&	 0.019			&	  0.173	\\  \hline
1174 (10)	&	 0.0006		&	  0.005	\\  \hline
1338 (13)	&	 3.492e-05	&	  0.0002	\\  \hline
  \end{tabular}
\caption{Elasticity problem in the perforated domain with small inclusions (Figure \ref{fig:twodomain}, left). One (Left), Two (Middle) and Four (Right) offline basis functions ($\theta = 0.7$). } \label{tab:el_sh}
\end{table}

\begin{figure}[!htb]
\centering
    \includegraphics[width=1.0\textwidth]{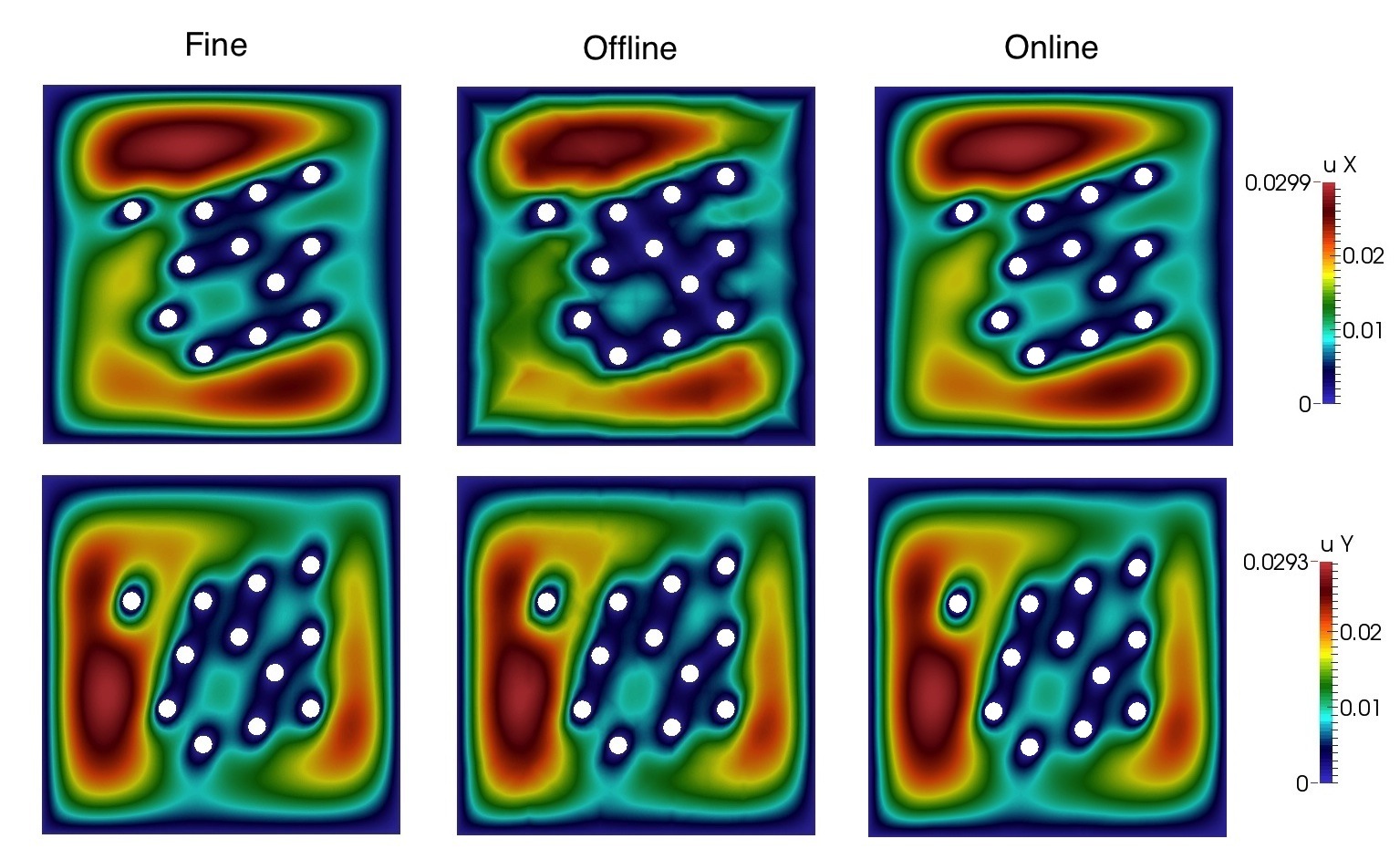}
    \caption{Elasticity problem in the perforated domain with small inclusions (Figure \ref{fig:twodomain}, left). Comparison of solutions in: Fine scale (left) $DOF = 13262$, Coarse-scale offline, $DOF=412$ (middle), Coarse-scale online without adaptivity, $DOF=574$ (right). Top: $u_1$. Bottom: $u_2$. }
    \label{fig:el1}
\end{figure}



\begin{table}[!htb]
\centering
  \begin{tabular}{ |c | c | c | } \hline
$DOF$& \multirow{2}{*}{$||e||_{L^2}$} &\multirow{2}{*}{$||e||_{H^1}$} \\
(\# iter)  &	& \\  \hline
\multicolumn{3}{|c|}{without adaptivity} \\ \hline
278  		&	38.074	&	 61.168 \\ \hline
440 (1)  	&	 2.098	&	  7.181 \\ \hline
602 (2)  	&	 0.167	&	  0.670 \\ \hline
764 (3)  	&	 0.021	&	  0.114 \\ \hline
926 (4)  	&	 0.001	&	  0.010 \\ \hline
\multicolumn{3}{|c|}{with adaptivity, $\eta^2_i = r^2_i$} \\ \hline
278  		&	38.074	&	 61.168 \\ \hline
436 (3)  	&	 1.058	&	  4.493 \\ \hline
628 (7)  	&	 0.029	&	  0.175 \\ \hline
760 (10)  	&	 0.002	&	  0.014 \\ \hline
950 (14)  	&	 5.339e-05	&	  0.0003 \\ \hline
\multicolumn{3}{|c|}{with adaptivity, $\eta^2_i = r^2_i \lambda_{i+1}^{-1}$} \\ \hline
278  		&	38.074	&	 61.168 \\ \hline
436 (3)  	&	 1.733	&	  7.005 \\ \hline
614 (7)  	&	 0.074	&	  0.399 \\ \hline
748 (10)  	&	 0.005	&	  0.037 \\ \hline
940 (14)  	&	 0.0002&	  0.001 \\ \hline
  \end{tabular}
  $\;\;\;$
%
%
  \begin{tabular}{ | c | c | c | } \hline
$DOF$& \multirow{2}{*}{$||e||_{L^2}$} &\multirow{2}{*}{$||e||_{H^1}$} \\
(\# iter)  &	& \\  \hline
\multicolumn{3}{|c|}{without adaptivity} \\ \hline
382  		&	15.585	&	 38.387 \\ \hline
544 (1)  	&	 0.794	&	  3.239 \\ \hline
706 (2)  	&	 0.071	&	  0.397 \\ \hline
868 (3)  	&	 0.008	&	  0.054 \\ \hline
1030 (4)  	&	 0.0006&	  0.003 \\ \hline
\multicolumn{3}{|c|}{with adaptivity, $\eta^2_i = r^2_i$} \\ \hline
382  		&	15.585	&	 38.387 \\ \hline
556 (3)  	&	 0.477	&	  2.116 \\ \hline
704 (6)  	&	 0.033	&	  0.211 \\ \hline
892 (10)  	&	 0.001	&	  0.007 \\ \hline
1038 (13)	&	 8.760e-05	&	  0.0005 \\ \hline
\multicolumn{3}{|c|}{with adaptivity, $\eta^2_i = r^2_i \lambda_{i+1}^{-1}$} \\ \hline
382  		&	15.585	&	 38.387 \\ \hline
548 (3)  	&	 0.528	&	  2.377 \\ \hline
740 (7)  	&	 0.019	&	  0.124 \\ \hline
878 (10)  	&	 0.001	&	  0.010 \\ \hline
1064 (14)	&	 4.710e-05	&	  0.0003 \\ \hline
  \end{tabular}
  $\;\;\;$
%
%
  \begin{tabular}{ | c | c | c | }  \hline
$DOF$& \multirow{2}{*}{$||e||_{L^2}$} &\multirow{2}{*}{$||e||_{H^1}$} \\
(\# iter)  &	& \\  \hline
\multicolumn{3}{|c|}{without adaptivity} \\ \hline
648  		&	8.870	&	 27.343 \\ \hline
810 (1)    	&	0.611  	&	 2.390 \\ \hline
972 (2)    	&	0.063  	&	 0.376 \\ \hline
1134 (3)  	&	0.006  	&	 0.042 \\ \hline
1296 (4)  	&	0.0005 &	 0.003 \\ \hline
\multicolumn{3}{|c|}{with adaptivity, $\eta^2_i = r^2_i$} \\ \hline
648  		&	8.870	&	 27.343 \\ \hline
820 (3)    		&	0.301  			&	 1.400 \\ \hline
972 (6)    		&	0.021  			&	 0.140 \\ \hline
1154 (10)  	&	0.0006  		&	 0.004 \\ \hline
1300 (13)  	&	3.784e-05  	&	 0.0002 \\ \hline
\multicolumn{3}{|c|}{with adaptivity, $\eta^2_i = r^2_i \lambda_{i+1}^{-1}$} \\ \hline
648  		&	8.870	&	 27.343 \\ \hline
810 (3)    		&	0.309  			&	 1.500 \\ \hline
996 (7)    		&	0.008  			&	 0.067 \\ \hline
1138 (10)  	&	0.0006  		&	 0.005 \\ \hline
1314 (14)  	&	1.659e-05  	&	 0.0001 \\ \hline
  \end{tabular}
\caption{Elasticity problem in the perforated domain with big inclusions (Figure \ref{fig:twodomain}, right). One (Left), Two (Middle) and Four (Right) offline basis functions ($\theta = 0.7$). }\label{tab:el_ex}
\end{table}

\newpage
\begin{figure}[!htb]
\centering
    \includegraphics[width=1.0\textwidth]{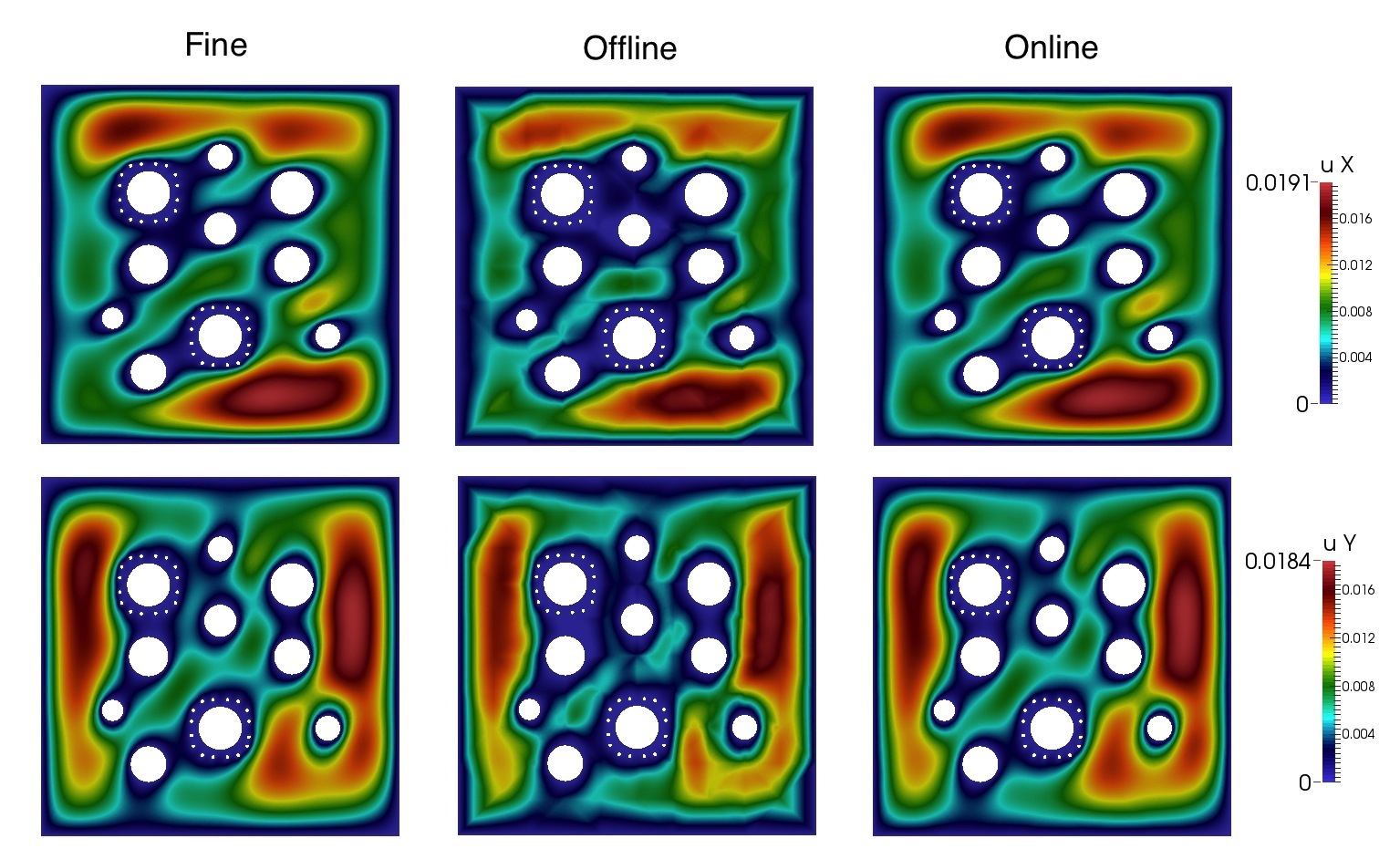}
    \caption{Elasticity problem  in the perforated domain with big inclusions (Figure \ref{fig:twodomain}, right). Comparison of solutions in: Fine scale (left) $DOF = 21986$, Coarse scale offline, $DOF=382$ (middle), Coarse scale online without adaptivity, $DOF=544$ (right). Top: $u_1$. Bottom: $u_2$. }
    \label{fig:el2}
\end{figure}

\subsection{Stokes equations in perforated domain}

In our final example, we consider the Stokes operator \eqref{eqn:Stokes}
with zero velocity $u = (0, 0)$ on $\partial \om^{\eps}\cap  \partial \mathcal {B}^{\eps}$ and $\frac{\partial{u}}{\partial{n}} = (0, 0)$ on $\partial \om$, and source term $f = (1, 1) \in \Omega^{\eps}$.
For the fine-scale approximation of the Stokes problem, we use $P_2$ elements for velocity and piecewise constants for pressure.
To improve the accuracy of multiscale solutions, we have enriched velocity spaces by adding online velocity basis.

The errors will be measured in relative $L^2$ and $H^1$norms for velocity and $L^2$ norm for pressure
\begin{align*}
||e_u||_{L^2} = \norm{e_u}_{L^2(\Omega^{\epsilon})} &=
\frac{ \norm{u-u_{\text{ms}}}_{L^2(\Omega^{\epsilon})} }{\norm{u}_{L^2(\Omega^{\epsilon})}} , \quad
||e_u||_{H^1} = \norm{e_u}_{H^1(\Omega^{\epsilon})} =
\frac{ \norm{u-u_{\text{ms}}}_{H^1(\Omega^{\epsilon})} }{\norm{u}_{H^1(\Omega^{\epsilon})}} , \\
||e_p||_{L^2(\Omega^{\epsilon})} &= \frac{ \norm{\bar{p}-p_{\text{ms}}}_{L^2(\Omega^{\epsilon})} }{\norm{\bar{p}}_{L^2(\Omega^{\epsilon})}},
\end{align*}
where $(u, p)$ and $(u_{\text{ms}},p_{\text{ms}})$ are  fine-scale and coarse-scale solutions, respectively for velocity and pressure, and $\bar{p}$ is the cell average of the fine scale pressure, that is,
$\bar{p} = \frac{1}{|K_i^{\epsilon}|} \int_{K_i^{\epsilon}} p $ for all $K_i^{\epsilon} \in {\cal T}^{H}$. Notice that we solve the reference solution $(u, p)$ on a full fine grid. The fine grid DOF is 77524 for the domain with small perforations(left in Figure \ref{fig:twodomain}) and 101386 for the domain with big perforations (right in Figure \ref{fig:twodomain}).

\subsubsection{Randomized snapshots}
As mentioned in Section \ref{random}, we will show the numerical results of Stokes problem for the offline GMsFEM using randomized snapshots. The convergence behaviors are shown in Tables \ref{rand-sh} and \ref{rand-ex} for perforated domain with small inclusions (Figure \ref{fig:twodomain}, left) and big inclusions (Figure \ref{fig:twodomain}, right), respectively,
where the notation $\omega^{+, \eps} = \omega^{\eps} + 4$ means that the oversampled region $\omega^{+,\eps}$
is obtained by enlarging the region $\omega^{\eps}$ by $4$ fine grid cells.
From these tables, we observe that the approach using randomized snapshots is more efficient since much fewer snapshot functions are used to achieve comparable accuracy. In particular, we get similar errors when the number of randomized snapshots is only around $20\%$ of the number of standard snapshots. Notice that in the randomized snapshot construction, we need to add the constant basis, i.e, the constant function in each $\omega^{+, \eps}$. Note that, we do not have the constant basis in domain with inclusions when calculating snapshot basis in the standard way. This additional constant basis function makes the errors smaller for low degrees of freedom. For example, in the domain with small inclusions, when $DOF = 534$, the velocity $L^2$ error is $12.49\%$ when we use standard snapshots, while the error is only $4.54\%$ when the dimension of the randomized snapshots is $24.1\%$ of the dimension 
of the whole snapshot space
(see Table \ref{rand-sh}). However, when the $DOF$ becomes larger, the errors for randomized snapshots are similar to that for standard snapshots. For instance, the velocity $L^2$ error is $0.07\%$ when $DOF = 1986$ in domain with big inclusions for both standard snapshots and randomized snapshots (see Table \ref{rand-ex}), where
the dimension of the randomized snapshot is $13.8\%$ of the dimension of the whole snapshots.  
We remark that, by
balancing the computational cost and accuracy, we find the results are satisfactory when $24.1\%$ randomized snapshots for domain with small inclusions(Figure \ref{fig:twodomain}, left) and $20.7\%$ randomized snapshots for domain with big inclusions(Figure \ref{fig:twodomain}, right) are used. 

\begin{table}[!h]
\centering
  \begin{tabular}{ | c | c | c | c  |  }
    \hline
$DOF$  
& $||\varepsilon_u||_{L^2(\Omega)}$(\%)	
& $||\varepsilon_u||_{H^1(\Omega)}$(\%)	
& $||\varepsilon_{\bar{p} }||_{L^2(\Omega)}$(\%)	 \\  \hline  
\multicolumn{4}{|c|}{Standard snapshot (100\%)} \\
\hline
534 	&12.49  &36.91 &21.46 	\\  \hline
1018 &0.28  &4.67  & 0.86 	\\  \hline
1986 & 0.031 &1.64  &0.0029 	\\  \hline
\multicolumn{4}{|c|}{Randomized snapshot: $\omega^{+, \eps} = \omega^{\eps} + 4$, 18.1\%} \\
\hline
534 	&4.99  & 23.95  &13.4 	\\  \hline
1018 	&0.54  &7.05 &0.53 	\\  \hline
1986 	&0.04  &1.77   &0.02 	\\  \hline
\multicolumn{4}{|c|}{Randomized snapshot: $\omega^{+, \eps} = \omega^{\eps} + 4$, 24.1\%} \\
\hline
534 	& 4.54  & 22.69  &8.28 	\\  \hline
1018 	&  0.47  &6.6  & 0.52	\\  \hline
1986 	& 0.036 &1.72 &0.009 	\\  \hline
  \end{tabular}
\caption{Perforated domain with small inclusions (Figure \ref{fig:twodomain}, left) for the Stokes problem using standard snapshots and randomized snapshots.}
\label{rand-sh}
\end{table}

\begin{table}[!h]
\centering
  \begin{tabular}{  | c | c | c  | c | }
    \hline
 $DOF$  
& $||\varepsilon_u||_{L^2(\Omega)}$(\%)	
& $||\varepsilon_u||_{H^1(\Omega)}$(\%)	
& $||\varepsilon_{\bar{p} }||_{L^2(\Omega)}$(\%)	 \\  \hline  
\multicolumn{4}{|c|}{Whole snapshot (100\%)} \\
\hline
534 	&11.34 &34.49 &16.18 	\\  \hline
1018 	& 0.17 & 3.62  &1.09  	\\  \hline
1986 	& 0.07 &2.44   &0.006	\\  \hline
\multicolumn{4}{|c|}{Randomized snapshot: $\omega^{+, \eps} = \omega^{\eps} + 4$, 13.8\%} \\
\hline
534 	& 6.04 &24.84 &9.37 	\\  \hline
1018 	&0.66  &7.27  &0.95  	\\  \hline
1986 	&0.07 &2.53  & 0.02	\\  \hline
\multicolumn{4}{|c|}{Randomized snapshot: $\omega^{+, \eps} = \omega^{\eps} + 4$, 20.7\%} \\
\hline
534 	& 5.3 & 23.39   &14.95	\\  \hline
1018 	&  0.56 & 6.87  &0.73 	\\  \hline
1986 	& 0.07 &2.51 &0.015	\\  \hline
  \end{tabular}
\caption{Perforated domain with big inclusions (Figure \ref{fig:twodomain}, right) for the Stokes problem using standard snapshots and randomized snapshots.}
\label{rand-ex}
\end{table}

\subsubsection{Adaptive online results}
In this section, we present adaptive online results for Stokes problem for two perforated domains depicted in Figure \ref{fig:twodomain}. The solutions are shown in Figure \ref{fig:st-sh} and Figure \ref{fig:st-ex}. In these figures, the $x_1$-component and $x_2$-component of the velocity solution are shown in the first and second rows,  and the pressure solution is presented in the third row. The three columns contain the fine-scale, coarse-scale offline and coarse-scale online solutions. In both cases, we observe that the offline velocity solution is not able to capture the low values at the corners of the domain. Some features between inclusions also do not appear correctly in the offline solution. For example, in Figure \ref{fig:st-sh}, the low values in the upper left and lower right corner of the domain are missing in the offline velocity solution. However, it was recovered very well in the online solution. Also, compared to the fine-scale solution, the features between the first hole on the left and the other inclusions are not captured in the offline solution. However, the online solutions get these features well and outputs almost same profiles as the fine solution. In Figure \ref{fig:st-ex}, for the domain has big inclusions with some extremely small inclusions around, we see even worse behavior of the offline solution compared to that in Figure \ref{fig:st-sh}, where the domain has several small inclusions. The low values of the velocity solution in the $x_2$-component along the right boundary are almost missing in the offline solution. The offline velocity solutions in both components  around inclusions are still very poor. These observations highlights the advantage of the online method. We performed other tests for different perforated domains, and the results also suggest that online method is quite necessary.

Now, we turn our attention to velocity $L^2(\om^{\eps})$, $H^1(\om^{\eps})$  errors and pressure $L^2(\om^{\eps})$ error presented in Table \ref{tab:st-sh} and Table \ref{tab:st-ex}. We consider different numbers of initial basis on each coarse neighborhood. For the perforated domain with small inclusions in Figure \ref{fig:st-sh}, we observe from Table \ref{tab:st-sh}  that both the velocity and pressure error decrease faster as we choose more initial bases. For example, the velocity has large $H^1$ error $66.28$\% using one initial basis. After adding one online basis, it reduces to $22.3$\%. When two initial bases are selected, the  velocity $H^1$ error reduces from $23.4$\% to $3.2$\% after one step enrichment. Fixing the number of initial basis, we can compare the error behavior for the online method with or without adaptivity. It appears that online adaptive method reduces the errors more effectively. For instance, when one initial basis is selected, the velocity $H^1$ error is $22.302$\% for DOF $488$ using non-adaptive online algorithm, while it is only $3.067$\% for a similar DOF $499$ using adaptive online method with indicator 1 (see \eqref{itm:ind1}). Comparing two error indicators for adaptive online method, we see that the indicator 1 is preferred when choosing one initial basis. Since the velocity error is $8.758$\% for DOF $504$ using indicator 2 (see \eqref{itm:ind2}), which is much larger than $3.067$\%. Also, the pressure error is $16.559$\% in this case when using indicator 2, which is almost $5$ times larger compared with $3.575$\% when using indicator 1. However, both indicator works well when selecting more initial bases. We see very similar errors for both velocity and pressure fields using different indicators when the number of initial basis is two or three.

For the second example in Figure \ref{fig:st-ex}, results are shown in Table \ref{tab:st-ex}. In this case, we observe that the online approach works better if we start with more initial basis. For example, the velocity $H^1$ error is $71.823$\% with one initial velocity basis, and reduces to $24.460$\% after adding one online basis. However it's only $20.430$\% with two initial basis without online enrichment. This implies that it is better to start with two or more initial basis in order to see that the more the online basis are used, the smaller the errors become. Similarly as before, the online approach with the adaptivity reduces the errors faster. Compared the two indicators, we see that the first error indicator (see \eqref{itm:ind1}) for the adaptive online method gives slightly better results for any number of initial basis. One can also find that the pressure error also reduces significantly when we only enrich the velocity space.

\begin{figure}[!h]
\begin{center}
        \includegraphics[width=1.0\textwidth]{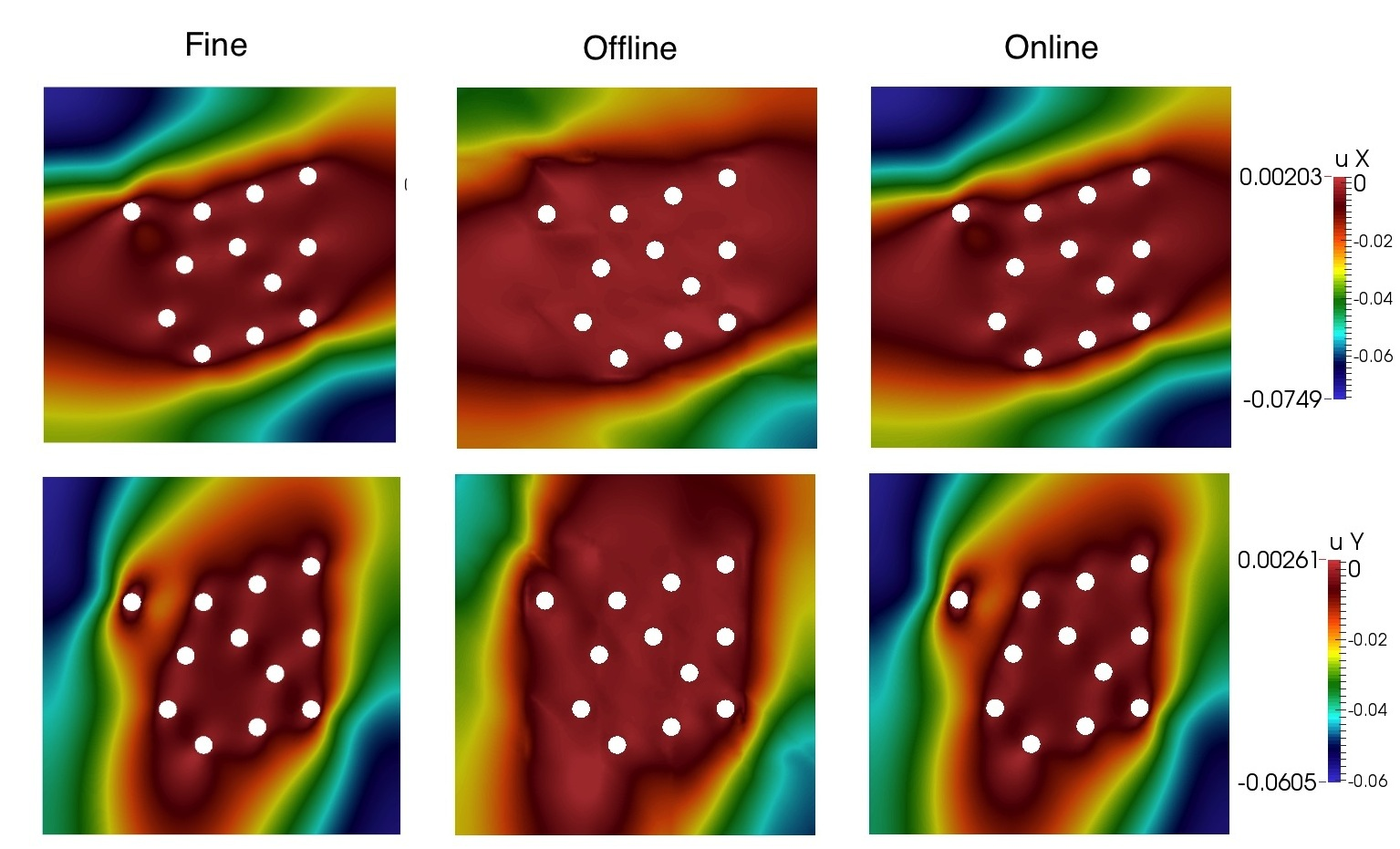}
        \includegraphics[width=1.0\textwidth]{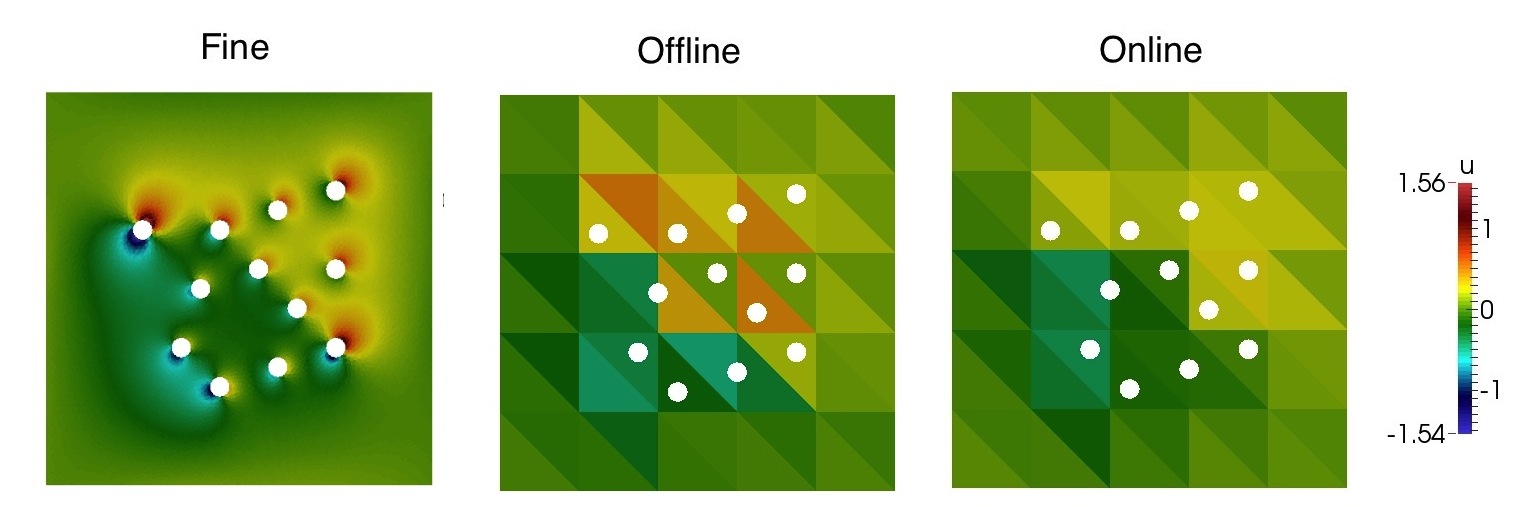}
\end{center}
    \caption{Stokes problem. Fine-scale and multiscale solutions for velocity and pressure ($u_1$ (Top), $u_2$ (Middle) and $p$ (Bottom)) in perforated domain with small inclusions(Figure \ref{fig:twodomain}, left). Left: fine-scale solution, $DOF = 77524$. Middle: multiscale solutions using 1 multiscale basis function for velocity, $DOF = 452$, velocity $L^2$ error is $42.439$ \%. Right: multiscale solutions after 2 online iteration without adaptivity, $DOF = 524$, velocity $L^2$ error is $1.688$ \%. }
    \label{fig:st-sh}
\end{figure}

\begin{table}[!h]
\centering
\begin{tabular}{ |c | c | c | c | }
     \hline
$DOF$& \multirow{2}{*}{$||e_u||_{L^2}(\%)$}
&\multirow{2}{*}{$||e_u||_{H^1}(\%)$}
& \multirow{2}{*}{$||e_{\bar{p} }||_{L^2}(\%)$} \\
(\# iter)  &	& & \\  \hline
452  		& 42.439 	& 66.276  	& 81.954	\\  \hline
\multicolumn{4}{|c|}{without adaptivity } \\ \hline
488 (1)   	& 6.300 	&  22.302 &  41.776	\\  \hline
524 (2)   	& 1.688 	&  7.191 	&  10.376	\\  \hline
596 (4)   	& 0.111 	&  0.692 	&  0.809	\\  \hline
740 (8)   	& 0.042 	&  0.514 	&  0.036	\\  \hline
\multicolumn{4}{|c|}{with adaptivity, $\eta^2_i = r^2_i$} \\ \hline
499 (4)   	& 0.627 	&  3.067 	&  3.575	\\  \hline
532 (6)   	& 0.074 	&  0.772 	&  0.329	\\  \hline
596 (10)   & 0.042 	&  0.515 	&  0.038	\\  \hline
723 (20)   &  0.033 	&  0.411 	&  0.146	\\  \hline
\multicolumn{4}{|c|}{with adaptivity, $\eta^2_i = r^2_i \lambda_{i+1}^{-1}$} \\ \hline
504 (3)    & 1.397	&  8.758	&  16.559	\\  \hline
546 (5)  	& 0.411	&  2.617	&  3.594	\\  \hline
611 (8)  	& 0.089	&  0.709	&  0.482	\\  \hline
750 (15)  	& 0.042	&  0.517	&  0.036	\\  \hline
\end{tabular}
$\;\;\;$
%
%
%
\begin{tabular}{ |c | c | c | c | }
     \hline
$DOF$& \multirow{2}{*}{$||e_u||_{L^2}(\%)$}
&\multirow{2}{*}{$||e_u||_{H^1}(\%)$}
& \multirow{2}{*}{$||e_{\bar{p} }||_{L^2}(\%)$} \\
(\# iter)  &	& & \\  \hline
694 		& 5.467 	& 23.329  	& 13.775	\\  \hline
\multicolumn{4}{|c|}{without adaptivity } \\ \hline
730 (1)  	&  0.400 	&  3.212 	&  1.187	\\  \hline
766 (2)  	&  0.066 	&  1.137 	&  0.135	\\  \hline
838 (4)  	&  0.033 	&  0.614 	&  0.053	\\  \hline
982 (8)  	&  0.011 	&  0.216 	&  0.016	\\  \hline
\multicolumn{4}{|c|}{with adaptivity, $\eta^2_i = r^2_i$} \\ \hline
732 (3)  	&  0.093 	&  1.335 	&  0.227	\\  \hline
781 (6)  	&  0.041 	&  0.742 	&  0.079	\\  \hline
844 (10)  	&  0.019 	&  0.367 	&  0.021	\\  \hline
992 (20)  	&  0.004 	&  0.104 	&  0.003	\\  \hline
\multicolumn{4}{|c|}{with adaptivity, $\eta^2_i = r^2_i \lambda_{i+1}^{-1}$} \\ \hline
745 (2)  	& 0.088	&  1.362	&  0.310	\\  \hline
769 (3)  	& 0.057	&  0.982	&  0.110	\\  \hline
841 (6)  	& 0.030	&  0.562	&  0.026	\\  \hline
988 (12)  	& 0.011	&  0.216	&  0.022	\\  \hline
\end{tabular}
$\;\;\;$
%
%
 \begin{tabular}{ |c | c | c | c | }
    \hline
$DOF$& \multirow{2}{*}{$||e_u||_{L^2}(\%)$}
&\multirow{2}{*}{$||e_u||_{H^1}(\%)$}
& \multirow{2}{*}{$||e_{\bar{p} }||_{L^2}(\%)$} \\
(\# iter)  &	&&  \\  \hline
936	 	& 0.936 	& 8.795  	&  8.515	\\  \hline
\multicolumn{4}{|c|}{without adaptivity } \\ \hline
972 (1)  	&  0.032 	&  0.782 	&  0.118	\\  \hline
1008 (2)  	&  0.013 	&  0.445 	&  0.018	\\  \hline
1080 (4)  	&  0.007 	&  0.261 	&  0.005	\\  \hline
1224 (8)  	&  0.003 	&  0.106 	&  0.001	\\  \hline
\multicolumn{4}{|c|}{with adaptivity, $\eta^2_i = r^2_i$} \\ \hline
975 (3)  		&  0.016 	&  0.493 	&  0.026	\\  \hline
1011 (6)  		&  0.009 	&  0.311 	&  0.007	\\  \hline
1082 (10)  	&  0.005 	&  0.167 	&  0.003	\\  \hline
1227 (20)  	&  0.002 	&  0.078 	&  0.001	\\  \hline
\multicolumn{4}{|c|}{with adaptivity, $\eta^2_i = r^2_i \lambda_{i+1}^{-1}$} \\ \hline
1003 (2)  	& 0.013	&  0.449	&  0.018	\\  \hline
1037 (3)  	& 0.010	&  0.343	&  0.007	\\  \hline
1105 (5)  	& 0.006	&  0.218	&  0.004	\\  \hline
1241 (9)  	& 0.002	&  0.094	&  0.001	\\  \hline
\end{tabular}
\caption{Stokes problem for perforated domain with small inclusions(Figure \ref{fig:twodomain}, left). One (Upper left), Two (Upper right) and Three e(Bottom) offline basis functions ($\theta = 0.7$). }
\label{tab:st-sh}
\end{table}

\begin{figure}[!h]
\begin{center}
        \includegraphics[width=1.0\textwidth]{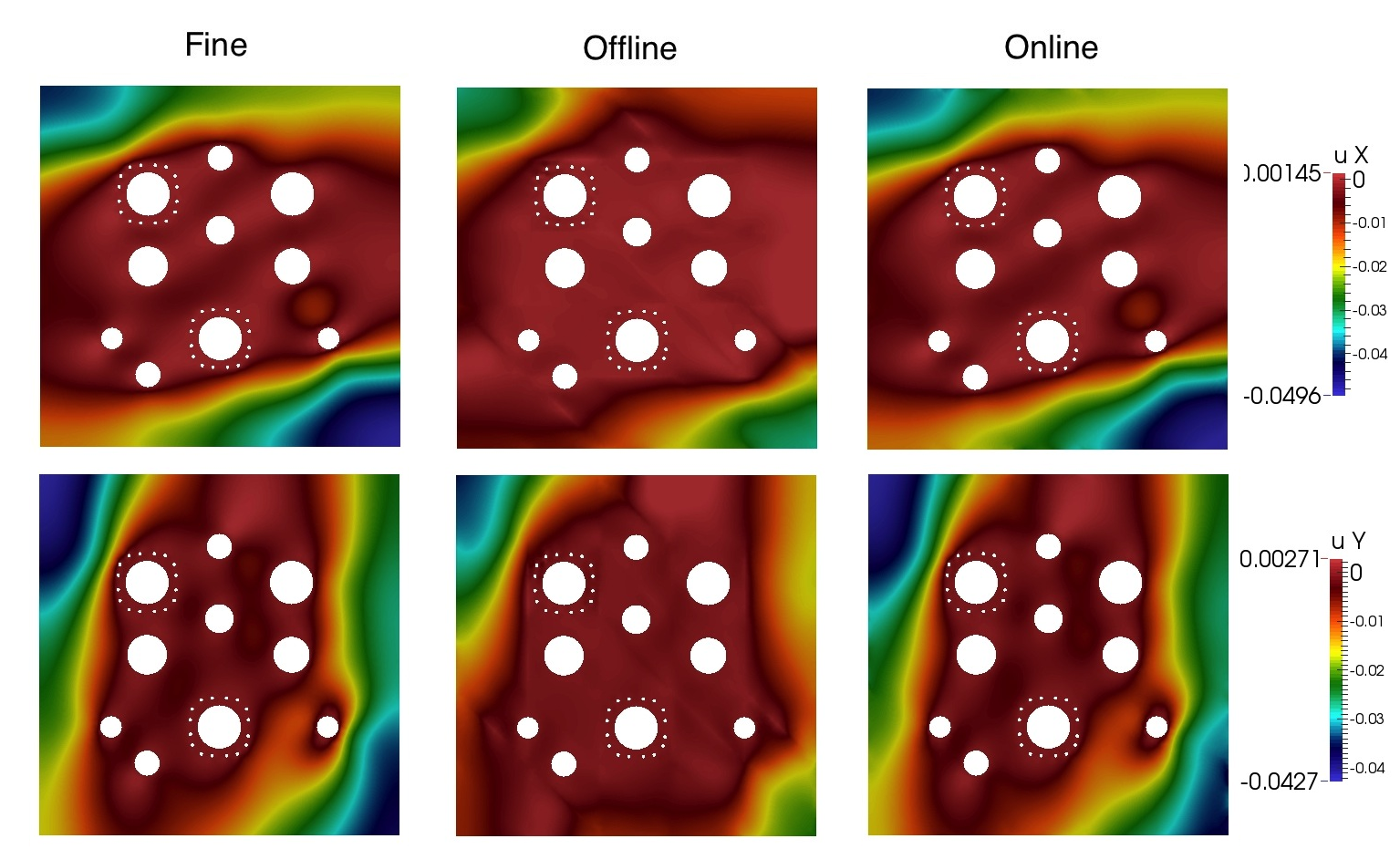}
        \includegraphics[width=1.0\textwidth]{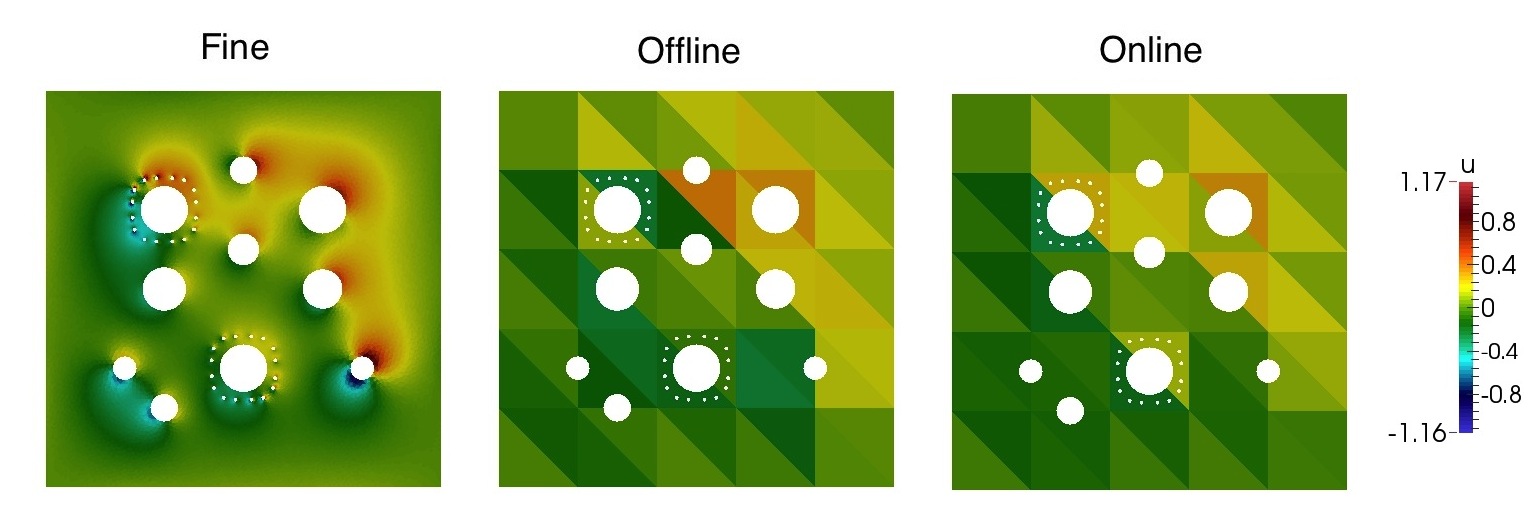}
\end{center}
    \caption{Stokes problem. Fine-scale and multiscale solutions for velocity and pressure ($u_1$ (Top), $u_2$ (Middle) and $p$ (Bottom)) in perforated domain with big inclusions(Figure \ref{fig:twodomain}, right). Left: fine-scale solution, $DOF = 101386$. Middle: multiscale solutions using 1 multiscale basis function for velocity, $DOF = 452$, velocity $L^2$ error is $47.943$ \%. Right: multiscale solutions after 2 online iteration without adaptivity, $DOF = 524$, velocity $L^2$ error is $2.266$ \%. }
    \label{fig:st-ex}
\end{figure}

\begin{table}[!h]
\centering
\begin{tabular}{ |c | c | c | c | }
     \hline
$DOF$& \multirow{2}{*}{$||e_u||_{L^2}(\%)$}
&\multirow{2}{*}{$||e_u||_{H^1}(\%)$}
& \multirow{2}{*}{$||e_{\bar{p} }||_{L^2}(\%)$} \\
(\# iter)  &	& & \\  \hline
452  		& 47.943	&  71.823	&  88.414\\  \hline
\multicolumn{4}{|c|}{without adaptivity } \\ \hline
488 (1)    	& 8.039  	&  24.460	&  21.206\\  \hline
524 (2)    	& 2.266  	&  12.286	&  11.107\\  \hline
596 (4)    	& 0.419  	&  2.477	&  1.433\\  \hline
740 (8)   	&  0.050  	&  0.733	&  0.038\\  \hline
\multicolumn{4}{|c|}{with adaptivity, $\eta^2_i = r^2_i$} \\ \hline
492 (3)  	& 2.444	&  13.528	&  11.355	\\  \hline
534 (6)  	& 0.546	&  4.315	&  3.168	\\  \hline
593 (10)  	& 0.087	&  0.870	&  0.282	\\  \hline
718 (20)  	& 0.041	&  0.501	&  0.025	\\  \hline
\multicolumn{4}{|c|}{with adaptivity, $\eta^2_i = r^2_i \lambda_{i+1}^{-1}$} \\ \hline
511 (2)  	& 2.346	&  12.396	&  10.890	\\  \hline
543 (3)  	& 1.302	&  7.944	&  3.784	\\  \hline
605 (5)  	& 0.175	&  1.157	&  0.443	\\  \hline
768 (11)  	& 0.043	&  0.507	&  0.068	\\  \hline
\end{tabular}
$\;\;\;$
%
%
%
\begin{tabular}{ |c | c | c | c | }
     \hline
$DOF$& \multirow{2}{*}{$||e_u||_{L^2}(\%)$}
&\multirow{2}{*}{$||e_u||_{H^1}(\%)$}
& \multirow{2}{*}{$||e_{\bar{p} }||_{L^2}(\%)$} \\
(\# iter)  &	 & &  \\  \hline
694 		& 4.117	&  20.430		&  13.635\\  \hline
\multicolumn{4}{|c|}{without adaptivity } \\ \hline
730 (1)  	& 0.260	&  2.293	&  1.443	\\  \hline
766 (2)  	& 0.075	&  0.982	&  0.057	\\  \hline
838 (4)  	& 0.030	&  0.469	&  0.030	\\  \hline
982 (8)  	& 0.008	&  0.169	&  0.005	\\  \hline
\multicolumn{4}{|c|}{with adaptivity, $\eta^2_i = r^2_i$} \\ \hline
735 (3)  	& 0.085	&  1.100	&  0.070	\\  \hline
766 (5)  	& 0.049	&  0.678	&  0.031	\\  \hline
842 (10)  	& 0.016	&  0.254	&  0.023	\\  \hline
981 (20)  	& 0.006	&  0.133	&  0.002	\\  \hline
\multicolumn{4}{|c|}{with adaptivity, $\eta^2_i = r^2_i \lambda_{i+1}^{-1}$} \\ \hline
762 (2)  	& 0.075	&  0.982	&  0.057	\\  \hline
796 (3)  	& 0.055	&  0.776	&  0.056	\\  \hline
864 (5)  	& 0.024	&  0.388	&  0.014	\\  \hline
1000 (9)  	& 0.007	&  0.149	&  0.004	\\  \hline
\end{tabular}
$\;\;\;$
%
%
%
%
 \begin{tabular}{ |c | c | c | c | }
    \hline
$DOF$& \multirow{2}{*}{$||e_u||_{L^2}(\%)$}
&\multirow{2}{*}{$||e_u||_{H^1}(\%)$}
& \multirow{2}{*}{$||e_{\bar{p} }||_{L^2}(\%)$} \\
(\# iter)  &	 & &  \\  \hline
936	 	& 0.407	&  5.627	&  2.091	\\  \hline
\multicolumn{4}{|c|}{without adaptivity } \\ \hline
972 (1)  	& 0.030	&  0.720	&  0.058	\\  \hline
1008 (2)  	& 0.019	&  0.490	&  0.014	\\  \hline
1080 (4)  	& 0.007	&  0.197	&  0.004	\\  \hline
1224 (8)  	& 0.004	&  0.119	&  0.002	\\  \hline
\multicolumn{4}{|c|}{with adaptivity, $\eta^2_i = r^2_i$} \\ \hline
977 (3)  		& 0.022	&  0.564	&  0.027	\\  \hline
1023 (6)  		& 0.010	&  0.275	&  0.011	\\  \hline
1085 (10)  	& 0.006	&  0.167	&  0.003	\\  \hline
1226 (19)  	& 0.003	&  0.089	&  0.001	\\  \hline
\multicolumn{4}{|c|}{with adaptivity, $\eta^2_i = r^2_i \lambda_{i+1}^{-1}$} \\ \hline
972 (1)  	& 0.030	&  0.720	&  0.058	\\  \hline
1040 (3)  	& 0.012	&  0.303	&  0.009	\\  \hline
1108 (5)  	& 0.006	&  0.161	&  0.003	\\  \hline
1244 (9)  	& 0.003	&  0.092	&  0.002	\\  \hline
\end{tabular}
\caption{Stokes problem for perforated domain with big inclusions(Figure \ref{fig:twodomain}, right). One (Upper left), Two (Upper right) and Three (Bottom) offline basis functions ($\theta = 0.7$). }
\label{tab:st-ex}
\end{table}


\section{Convergence Analysis}
\label{Analysis}
The result in \cite{chung2015residual} has shown the convergence for online adaptive method applied to elliptic problems, and the same results can be applied for elasticity problem. In this section, we will prove the convergence of adaptive online GMsFEM for Stokes problem.

First, we will prove the following inf-sup condition for the approximation of Stokes problem using offline GMsFEM.
This ensures that the method, with both offline and online basis functions,
is well-posed.
We will assume the continuous inf-sup condition holds.
In particular, there is a constant $C_{\text{cont}}>0$ such that
for any $p\in L^2(\Omega^{\eps})$ with zero mean, we have
\begin{equation}
\label{eq:cont_infsup}
\sup_{v\in (H^1_0(\Omega^{\eps}))^2} \frac{\int_{\Omega^{\epsilon}} \div(v)p}{\|v\|_{H^1{(\Omega^{\epsilon})}}} \geq C_{\text{\rm cont}} \norm{p}_{L^2(\Omega^{\epsilon})}.
\end{equation}
Equivalently, there exists $v\in (H^1_0(\Omega^{\eps}))^2$ such that
$\div v = p$ and $\| v\|_{H^1(\Omega^{\eps})} \leq C_{\text{cont}}^{-1} \|p\|_{L^2(\Omega^{\eps})}$.
Let $N^0_e$ be the number of interior coarse edges.
We remark that, for each interior coarse edge $E_i$, there exists a basis function $\Phi_i^{\text{off}}$ such that $\int_{E_i} \Phi_i^{\text{off}}\cdot n_i \neq 0$.

\begin{lemma}
For all $p\in Q_{\text{\rm off}}$, there is a constant $C_{\text{\rm infsup}} > 0$ such that
\begin{equation}
 \sup_{u\in V_{\text{\rm off}}} \frac{\int_{\Omega^{\epsilon}} \div(u)p}{\|u\|_{H^1{(\Omega^{\epsilon})}}} \geq C_{\text{\rm infsup}} \norm{p}_{L^2(\Omega^{\epsilon})}.
 \label{eq:infsup}
\end{equation}
\end{lemma}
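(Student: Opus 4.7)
The plan is to argue by constructing a Fortin-type operator $\Pi: (H^1_0(\Omega^{\eps}))^2 \to V_{\text{off}}$ that preserves integrals against $Q_{\text{off}}$. Given $p \in Q_{\text{off}}$, the continuous inf-sup condition \eqref{eq:cont_infsup} yields $v \in (H^1_0(\Omega^{\eps}))^2$ with $\div v = p$ and $\|v\|_{H^1(\Omega^{\eps})} \leq C_{\text{cont}}^{-1}\|p\|_{L^2(\Omega^{\eps})}$. If I can then produce $u = \Pi v \in V_{\text{off}}$ satisfying $\int_{\Omega^{\eps}} \div(u)\, q = \int_{\Omega^{\eps}} \div(v)\, q$ for every $q \in Q_{\text{off}}$ together with $\|u\|_{H^1(\Omega^{\eps})} \leq C_F \|v\|_{H^1(\Omega^{\eps})}$, then the quotient $\int \div(u) p / \|u\|_{H^1} \geq \|p\|_{L^2}^2/(C_F \|v\|_{H^1}) \geq (C_{\text{cont}}/C_F)\|p\|_{L^2}$ delivers \eqref{eq:infsup} with $C_{\text{infsup}} = C_{\text{cont}}/C_F$.

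The matching condition simplifies considerably. By construction every $w \in V_{\text{off}}$ has constant divergence on each coarse element $K$, since the Stokes extension step \eqref{hrm-x}--\eqref{hrm-y} prescribes $\div w|_K = \tfrac{1}{|K|}\int_{\partial K} w\cdot n_K$. Consequently, for any piecewise-constant $q$,
\begin{equation*}
\int_{\Omega^{\eps}} \div(w)\, q \;=\; \sum_K q|_K \int_K \div w \;=\; \sum_K q|_K \int_{\partial K} w \cdot n_K.
\end{equation*}
Applying this to both $u$ and $v$, the divergence-matching condition is equivalent to requiring $\int_{E} u \cdot n_E = \int_{E} v \cdot n_E$ on every interior coarse edge $E$ (boundary edges contribute nothing since $v$ has zero trace there). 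The construction therefore reduces to a coarse-edge flux-matching interpolation problem.

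I would build $u$ as a linear combination of the edge-based offline basis functions. For each interior coarse edge $E_i$ the hypothesis supplies $\Phi_i^{\text{off}}$ with $\int_{E_i} \Phi_i^{\text{off}} \cdot n_i \neq 0$, and since $\Phi_i^{\text{off}}$ is supported in the edge-based neighborhood $\omega_i^{\eps}$ consisting of the two coarse elements sharing $E_i$, the resulting flux-matching linear system couples only neighboring edges. Its diagonal entries are the nonvanishing numbers $\int_{E_i} \Phi_i^{\text{off}} \cdot n_i$, and solvability can be established by a Schur-complement argument carried out element by element, producing the coefficients of $\Pi v$.

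The main obstacle is the $H^1$-stability bound $\|\Pi v\|_{H^1(\Omega^{\eps})} \leq C_F \|v\|_{H^1(\Omega^{\eps})}$. This requires two ingredients: a trace inequality bounding each coarse-edge flux $\int_{E_i} v \cdot n_i$ by $\|v\|_{H^1(\omega_i^{\eps})}$, and a local stability estimate showing that the $H^1$-norm of $\Phi_i^{\text{off}}$ is controlled in terms of $\int_{E_i} \Phi_i^{\text{off}} \cdot n_i$. The second ingredient is the delicate one: it must exploit the minimum-energy character of the Stokes extension in \eqref{hrm-x}--\eqref{hrm-y}, combined with Poincar\'e and trace inequalities on each coarse element, while keeping the constants independent of the fine-scale mesh. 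Once these are in place, a finite-overlap argument across coarse neighborhoods assembles the local bounds into the global stability estimate, yielding \eqref{eq:infsup}.
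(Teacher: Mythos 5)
Your plan is essentially the paper's proof: both take the $v$ supplied by the continuous inf-sup condition \eqref{eq:cont_infsup}, build $u$ in the span of the edge-based offline basis functions so that the coarse-edge fluxes of $u$ match those of $v$ (which, since functions in $V_{\text{off}}$ have constant divergence on each coarse element and $Q_{\text{off}}$ is piecewise constant, gives $\int_{\Omega^{\eps}}\div(u)\,p=\int_{\Omega^{\eps}}\div(v)\,p=\norm{p}_{L^2(\Omega^{\eps})}^2$), and then prove $H^1$-stability from a scaled trace inequality for the fluxes of $v$ together with a mesh-independent energy bound for the normalized basis functions. The one place where your route is weaker than the paper's is the solvability of the flux-matching system: you assert that it couples neighboring edges and can be handled by an unspecified Schur-complement argument, but you never establish invertibility. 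In fact no such argument is needed: because $\Phi_j^{\text{off}}$ is obtained by multiplying by the partition-of-unity function $\chi_j$, which vanishes on $\partial\omega_j^{\eps}$, and every coarse edge of the two elements forming the edge-based patch $\omega_j^{\eps}$ other than $E_j$ itself lies on $\partial\omega_j^{\eps}$, the basis function $\Phi_j^{\text{off}}$ has zero trace on every coarse edge $E_i$ with $i\neq j$. The flux-matching system is therefore diagonal, and after normalizing $\int_{E_i}\Phi_i^{\text{off}}\cdot n_i=1$ one simply sets $c_i=\int_{E_i}v\cdot n_i$, which is exactly what the paper does. With that correction your outline coincides with the paper's argument; like the paper, you leave the local energy bound for the normalized $\Phi_i^{\text{off}}$ (the constant the paper calls $C_i$) as an assertion rather than a proof.
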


\begin{proof}
Let $p\in Q_{\text{off}}$ with zero mean.
Using the continuous inf-sup condition (\ref{eq:cont_infsup}),
there exists $v\in H^1_0(\Omega^{\eps})$ such that
$\div v = p$ and $\| v\|_{H^1(\Omega^{\eps})} \leq C_{\text{cont}}^{-1} \|p\|_{L^2(\Omega^{\eps})}$.
Since, for each interior coarse edge $E_i$, there exists a basis function $\Phi_i^{\text{off}}$ such that $\int_{E_i} \Phi_i^{\text{off}}\cdot n_i \neq 0$.
We can then define $u\in V_{\text{off}}$ by the following
\begin{equation*}
 u=\sum_{i=1}^{N_e^0} c_i \Phi_i^{\text{off}}, \quad c_i=\int_{E_i} v \cdot n_i
\end{equation*}
where we assume that the basis function are normalized so that $\int_{E_i} \Phi_{i}^{\text{off}}\cdot n_i = 1$. So, by the Green's identity, we have
\begin{equation*}
\int_{ \Omega^{\epsilon}} p^2 = \int_{ \Omega^{\epsilon}} \div{(v)}  p = \sum_{i=1}^{N^0_e}\int_{E_i} (v \cdot n_i)[p] = \sum_{i=1}^{N^0_e}\int_{E_i} c_i(\Phi_i^{\text{off}}\cdot n_i)[p]= \int_{ \Omega^{\epsilon}} \div {(u)} p
\end{equation*}
where $[p]$ is the jump of $p$.
We will next show that there is a constant $C_{\text{\rm infsup}} > 0$
such that $\|u\|_{H^1(\Omega^{\eps})} \leq C_{\text{infsup}}^{-1} \| p\|_{L^2(\Omega^{\eps})}$.

Since $c_i^2 \leq H\int_{E_i} (v \cdot n_i)^2$, we have
\begin{equation*}
\norm{\nabla u}_{L^2(\Omega^{\epsilon})}^2  \leq \sum_{i=1}^{N_e^0}\int_{\omega_i^{\epsilon}} c_i^2 \, \nabla \Phi_{i}^{\text{off}} : \nabla \Phi_{i}^{\text{off}} \leq C_{\text{max}} H \sum_{K\in \mathcal{T}^H} \int_{\partial K} (v\cdot n)^2
\end{equation*}
where $C_{\text{max}} = \max_{1\leq i\leq N_e^0} C_i$ and $C_i = \min \int_{\omega_i^{\epsilon}} \nabla \Phi_i^{\text{off}}  : \nabla \Phi_i^{\text{off}}$
with the minimum taken over all basis functions $\Phi_i^{\text{off}}$ such that $\int_{E_i} \Phi_i^{\text{off}}\cdot n_i \neq 0$. Notice that the constant $C_i$ are independent of the mesh size.
Using the trace theorem on the coarse element $K$, we have $H\int_{\partial K} (v\cdot n)^2\preceq \| v\|_{H^1(K)}^2$.
So, by the continuous inf-sup condition, we obtain
$$ \sum_{K\in\mathcal{T}^H} H\int_{\partial K} (v\cdot n)^2\preceq \|v\|_{H^1(\Omega^{\eps})} \preceq \| p\|^2_{L^2(\Omega^{\eps})}.$$
This completes the proof.




\end{proof}

Now, we will show the convergence of our online adaptive enrichment scheme for the Stokes problem.
First, we define a reference solution by $(u,p)\in (H^1_0(\Omega^{\eps}))^2 \times Q_{\text{off}}$ which solves
\begin{align}\label{eq:fine_system2}
\avrg{\mathcal {L}^{\eps}(u,p),(v,q)}{\om^{\eps}}= ((f,0),(v,q))_{\om^{\eps}}, \qquad  \text{for all } (v,q) \in H^1_0(\Omega^{\eps})^2 \times Q_\text{off}.
\end{align}
Notice that the solution of (\ref{eq:fine_system2}) and the solution of (\ref{eq:fine_system1}) have a difference proportional to the coarse mesh size $H$.
We also define a snapshot solution by $(\hat{u},\hat{p})\in V_{\text{\rm snap}} \times Q_{\text{off}}$ which solves
\begin{align}\label{eq:fine_system3}
\avrg{\mathcal {L}^{\eps}(\hat{u},\hat{p}),(v,q)}{\om^{\eps}}= ((f,0),(v,q))_{\om^{\eps}}, \qquad  \text{for all } (v,q) \in V_{\text{\rm snap}} \times Q_\text{off}.
\end{align}
We notice that the difference $\|u-\hat{u}\|_{H^1(\Omega^{\eps})}$ represents an irreducible error.
Furthermore, standard finite element analysis shows that
\begin{equation}
\label{eq:cea}
\| u - u_{\text{\rm ms}}\|_{H^1(\Omega^{\eps})} \leq \| u - \widetilde{u}_{\text{\rm ms}}\|_{H^1(\Omega^{\eps})}
\end{equation}
for any $\widetilde{u}_{\text{ms}} \in V_{\text{off}}$.
Next, we prove the following a-posteriori error bound for the offline GMsFEM (\ref{eq:coarse_system}).
The notation $a \preceq b$ means that there is a generic constant $C>0$ such that $a\leq Cb$.

\begin{theorem}
Let $u$ be the reference solution defined in (\ref{eq:fine_system2}), $\hat{u}$ be the snapshot solution defined in (\ref{eq:fine_system3})
and $u_{\text{\rm ms}}$ be the multiscale solution satisfying (\ref{eq:coarse_system}). Then,
we have
\begin{align}
\label{eq:post1}
\norm{ \hat{u} - u_{\text{\rm ms}}}_{H^1(\Omega^{\epsilon})}^2
\leq C_{\text{s}}  \sum_{i=1}^{N_u}  \big( 1 + \frac{1}{\lambda^{i,\text{\rm off}}_{l_i+1}} \big) \norm{R_i}_{V^*}^2
\end{align}
where $l_i$ is the number of offline basis functions used for the coarse neighborhood $\omega_i^{\eps}$,
and $\lambda^{i,\text{\rm off}}_j$ is the $j$-th eigenvalue for the coarse neighborhood $\omega_i^{\eps}$.
The constant $C_s$ is the maximum number of coarse neighborhoods corresponding to coarse blocks.
Moreover, we have
\begin{align}
\label{eq:post2}
\norm{ u - u_{\text{\rm ms}}}_{H^1(\Omega^{\epsilon})}^2
\leq 2C_s  \sum_{i=1}^{N_u}  \big( 1 + \frac{1}{\lambda^{i,\text{\rm off}}_{l_i+1}} \big) \norm{R_i}_{V^*}^2  + 2\norm{ u - \hat{u}}_{H^1(\Omega^{\epsilon})}^2 .
\end{align}
 \end{theorem}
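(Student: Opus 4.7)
My plan is to combine coercivity of the Stokes bilinear form, Galerkin orthogonality, and the local spectral property of the offline eigenvalue problem (\ref{eq:offeig2}) to produce the weights $(\lambda^{i,\text{\rm off}}_{l_i+1})^{-1}$; then (\ref{eq:post2}) will follow from (\ref{eq:post1}) by a triangle inequality.

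First, I would reduce $\norm{\hat{u} - u_{\text{\rm ms}}}_{H^1(\om^{\eps})}^2$ to a global residual. Setting $e_u := \hat{u} - u_{\text{\rm ms}}$ and $e_p := \hat{p} - p_{\text{\rm ms}} \in Q_{\text{\rm off}}$, and defining
\[
R(v) = (f,v)_{\om^{\eps}} - \int_{\om^{\eps}} \nabla u_{\text{\rm ms}} : \nabla v\,dx + \int_{\om^{\eps}} p_{\text{\rm ms}}\,\div(v)\,dx, \qquad v \in (H^1_0(\om^{\eps}))^2,
\]
we have $R\equiv 0$ on $V_{\text{\rm off}}$ by (\ref{eq:coarse_system}). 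Subtracting (\ref{eq:coarse_system}) from (\ref{eq:fine_system3}) yields $a(e_u,v) - (e_p,\div v) = R(v)$ for $v \in V_{\text{\rm snap}}$, where $a(u,v) := \int_{\om^{\eps}} \nabla u : \nabla v$. Because both $\hat{u}$ and $u_{\text{\rm ms}}$ are discretely divergence-free against $Q_{\text{\rm off}}$, the pressure term $(e_p,\div e_u)$ vanishes, and since $V_{\text{\rm off}} \subseteq V_{\text{\rm snap}}$ (the Stokes extension $\mathcal{H}$ yields basis functions expressible through snapshots), testing with $v=e_u$ delivers the identity $\norm{\nabla e_u}_{L^2(\om^{\eps})}^2 = R(e_u)$.

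Second, I would subtract an appropriate element of $V_{\text{\rm off}}$ to unlock the eigenvalue gap. Let $P_i$ denote the spectral projection in $V_{\text{\rm snap}}^i$ onto the first $l_i$ eigenvectors of (\ref{eq:offeig2}), and set $I_H e_u := \sum_i \mathcal{H}(\chi_i P_i(e_u|_{\omega_i^{\eps}})) \in V_{\text{\rm off}}$. By Galerkin orthogonality $R(I_H e_u) = 0$, so $R(e_u) = R(e_u - I_H e_u)$; the partition-of-unity identity $\sum_i \chi_i \equiv 1$ combined with the local support of each $R_i$ then gives $R(e_u - I_H e_u) = \sum_{i=1}^{N_u} R_i(\chi_i(I-P_i)(e_u|_{\omega_i^{\eps}}))$. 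The spectral property of (\ref{eq:offeig2}) provides the key local bound: since $(I-P_i)e_u$ is $s_i$-orthogonal to the first $l_i$ eigenvectors,
\[
s_i((I-P_i)e_u,(I-P_i)e_u) \le (\lambda^{i,\text{\rm off}}_{l_i+1})^{-1}\, a_i((I-P_i)e_u,(I-P_i)e_u),
\]
and $|\chi_i|\le 1$ together with $s_i(v,v) = \int_{\omega_i^{\eps}} |\nabla\chi_i|^2 |v|^2$ and a Leibniz expansion of $\nabla(\chi_i v)$ yield $\norm{\chi_i(I-P_i)e_u}_{H^1(\omega_i^{\eps})}^2 \le 2\bigl(1 + (\lambda^{i,\text{\rm off}}_{l_i+1})^{-1}\bigr) a_i((I-P_i)e_u,(I-P_i)e_u)$. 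Bounding $|R_i(\cdot)| \le \norm{R_i}_{V^*}\norm{\cdot}_{H^1(\omega_i^{\eps})}$, applying Cauchy-Schwarz over $i$, and using the finite-overlap constant $C_s$ (so that $\sum_i a_i((I-P_i)e_u,(I-P_i)e_u) \le C_s\norm{\nabla e_u}_{L^2(\om^{\eps})}^2$) produces (\ref{eq:post1}) after dividing by $\norm{\nabla e_u}_{L^2(\om^{\eps})}$. The bound (\ref{eq:post2}) then follows from $\norm{u - u_{\text{\rm ms}}}^2 \le 2\norm{u - \hat{u}}^2 + 2\norm{\hat{u} - u_{\text{\rm ms}}}^2$.

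The hard part will be the precise construction of $I_H e_u$ and the verification of the localization identity $R(e_u - I_H e_u) = \sum_i R_i(\chi_i(I-P_i)(e_u|_{\omega_i^{\eps}}))$. One must confirm that $\mathcal{H}(\chi_i P_i (e_u|_{\omega_i^{\eps}}))$ indeed lies in $V_{\text{\rm off}}$, which relies on $V_{\text{\rm off}} \subseteq V_{\text{\rm snap}}$ and on the divergence-compatibility enforced by (\ref{hrm-x})--(\ref{hrm-y}); and one must ensure that overlaps between neighborhoods do not spoil the clean $P_i$--$(I-P_i)$ split, which requires careful bookkeeping of the partition of unity against the local snapshot spaces.
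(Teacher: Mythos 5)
Your proposal follows essentially the same route as the paper's proof: the error--residual identity $\norm{\nabla(\hat{u}-u_{\text{\rm ms}})}_{L^2(\Omega^{\eps})}^2=R(\hat{u}-u_{\text{\rm ms}})$ via the discrete divergence-free property, subtraction of the offline interpolant built from $\mathcal{H}(\chi_i P_i(\cdot))$, the eigenvalue gap from (\ref{eq:offeig2}) producing the $\big(1+(\lambda^{i,\text{\rm off}}_{l_i+1})^{-1}\big)$ weights, Cauchy--Schwarz with the overlap constant $C_s$, and a triangle inequality for (\ref{eq:post2}). The only wrinkle is that your localization identity should read $\sum_i R_i\big(\mathcal{H}(\chi_i e_u)-\mathcal{H}(\chi_i P_i e_u)\big)$ rather than dropping $\mathcal{H}$; the paper closes exactly this gap by the minimum-energy property $\norm{\mathcal{H}(v)}_{H^1(\omega_i^{\eps})}\leq\norm{v}_{H^1(\omega_i^{\eps})}$, after which your estimates go through unchanged.
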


\begin{proof}
For any $\phi \in V_{\text{snap}}$ such that $\int_{K_i^{\epsilon}} \div \phi =  0$ and $\phi = \mathcal{H}(\phi)$, we have
\begin{align}
\label{before}
\int_{\Omega^{\epsilon}} \nabla(\hat{u} - u_{\text{ms}}) :\nabla \phi
&= \int_{\Omega^{\epsilon}} \nabla(\hat{u} - u_{\text{ms}}) : \nabla \phi  - \int_{\Omega^{\epsilon}} (\hat{p} - p_{\text{ms}}) \div \phi
\end{align}
where we use the fact that $\int_{K_i^{\epsilon}} (\hat{p} - p_{\text{ms}}) \div \phi = 0$ since $\hat{p} - p_{\text{ms}}$ is constant in $K_i^{\epsilon}$.
We can write (\ref{before}) as
\begin{equation}\label{after}
\int_{\Omega^{\epsilon}} \nabla(\hat{u} - u_{\text{ms}}) : \nabla \phi  = R(\phi)
\end{equation}
where $R(\phi)$ is the global residual defined by
$R(\phi) = \int_{\Omega^{\epsilon}} \nabla(u - u_{\text{ms}}) : \nabla \phi  - \int_{\Omega^{\epsilon}} (p - p_{\text{ms}}) \div \phi$ for all $\phi$.
Let $\phi^{\text{off}}$ be an arbitrary function in the space $V_{\text{off}}$.
We can write $\phi^{\text{off}} = \sum_{i=1}^{N_u} \phi^{\text{off}}_i$ where $\phi^{\text{off}}_i$
is the component of $\phi^{\text{off}}$ in the local offline space corresponding to the coarse neighborhood $\omega_i^{\eps}$.
Using the facts that $V_{\text{off}} \subset V_{\text{snap}}$ and $R(\phi^{\text{off}})=0$, we can write $R(\phi)$ as
\begin{align*}
R(\phi) = R(\mathcal{H}(\phi) - \phi^{\text{off}}) =  R\Big(\sum_{i=1}^{N_u} (\mathcal{H}(\chi_i\phi) - \phi^{\text{off}}_i) \Big)
=  \sum_{i=1}^{N_u} R_i\Big(\mathcal{H}(\chi_i\phi) - \phi^{\text{off}}_i \Big) \numberthis  \label{trunc}
\end{align*}
 where $R_i$ is the local residual defined in \eqref{stokes-resid}.
We will define $\phi^{\text{off}}$ as follows.
Notice that $\mathcal{H}(\chi_i\phi)$ belongs to the local snapshot space $V_{\text{snap}}^i$.
We can take $\phi^{\text{off}}_i$ as the component of $\mathcal{H}(\chi_i\phi)$
in the offline space $V_{\text{off}}^i$. We write $\phi^{\text{off}}_i = \mathcal{H}(\chi_i \phi_i)$.

Then from \eqref{trunc}, we have
\begin{equation*}
R(\phi) \leq \sum_{i=1}^{N_u} \norm{R_i}_{(V^i)^*} \, \norm{ \mathcal{H}(\chi_i \phi) - \mathcal{H}(\chi_i \phi_i) }_{H^1(\omega_i^{\eps})}.
\end{equation*}
Using the minimum energy property, we have
\begin{equation*}
R(\phi) \leq \sum_{i=1}^{N_u} \norm{R_i}_{(V^i)^*} \, \norm{ \chi_i (\phi - \phi_i) }_{H^1(\omega_i^{\eps})}.
\end{equation*}
By the spectral problem (\ref{eq:offeig2}), we obtain
\begin{equation}
\label{eq:residualbound}
R(\phi) \leq \sum_{i=1}^{N_u} \big( 1 + \frac{1}{\lambda^{i,\text{off}}_{l_i+1}} \big)^{\frac{1}{2}} \norm{R_i}_{(V^i)^*} \, \norm{ \phi - \phi_i }_{H^1(\omega_i^{\eps})}
\leq \sum_{i=1}^{N_u} \big( 1 + \frac{1}{\lambda^{i,\text{off}}_{l_i+1}} \big)^{\frac{1}{2}} \norm{R_i}_{(V^i)^*} \, \norm{ \phi }_{H^1(\omega_i^{\eps})}
\end{equation}
where we used the orthogonality of eigenfunctions from the spectral problem (\ref{eq:offeig2}).
Finally, we take $\phi = \hat{u} - u_{\text{ms}}$. Notice that, by (\ref{eq:fine_system3}) and (\ref{eq:coarse_system}),
we have
\begin{equation*}
\int_{K_i^{\eps}} \div (\hat{u}-u_{\text{ms}}) = 0.
\end{equation*}
In addition, for this choice of $\phi$, we have $\phi = \mathcal{H}(\phi)$ since $\hat{u}, u_{\text{ms}} \in V_{\text{snap}}$.
Hence (\ref{after}) and (\ref{eq:residualbound}) imply that
\begin{equation*}
\| \hat{u} - u_{\text{ms}} \|_{H^1(\Omega^{\eps})}^2  \leq \sum_{i=1}^{N_u} \big( 1 + \frac{1}{\lambda^{i,\text{off}}_{l_i+1}} \big)^{\frac{1}{2}} \norm{R_i}_{(V^i)^*} \, \norm{ \hat{u}-u_{\text{ms}} }_{H^1(\omega_i^{\eps})},
\end{equation*}
which shows (\ref{eq:post1}).
The proof for (\ref{eq:post2}) follows from $\| u - u_{\text{ms}}\|_{H^1(\Omega^{\eps})}
\leq \| u - \hat{u}\|_{H^1(\Omega^{\eps})}  + \| \hat{u} - u_{\text{ms}}\|_{H^1(\Omega^{\eps})}$.

\end{proof}

We recall that the norm of the local residual $R_i$ is defined in (\ref{eq:Rn1}).
We define a modified norm as
\begin{equation}
\label{eq:Rn2}
||R_i||_{(V_0^i)^*} = \sup_{v \in V_0^i } \frac{|R_i(v)|}{\|v\|_{H^1(\omega_i^{\eps})}}
\end{equation}
where $V_0^i \subset V^i$ and the vectors $v\in V_0^i$ satisfies $\int_{\Omega^{\eps}} \div(v) \, q = 0$ for all $q\in Q_{\text{off}}$.
It is easy to show that $||R_i||_{(V_0^i)^*} \leq ||R_i||_{(V^i)^*}$.
In the next theorem, we will show the convergence
of the online adaptive GMsFEM for the Stokes problem.
The theorem states that our method is convergent up to an irreducible error $\|u-\hat{u}\|_{H^1(\Omega^{\eps})}$
with enough number of offline basis functions.

\begin{theorem}
\label{thm:online}
Let $u$ be the reference solution defined in (\ref{eq:fine_system2}), $\hat{u}$ be the snapshot solution defined in (\ref{eq:fine_system3})
and $u_{\text{\rm ms}}^m$ be the multiscale solution
of (\ref{eq:coarse_system}) in the enrichment level $m$.
Assume that $l_i$ offline basis functions for the coarse neighborhood $\omega_i^{\eps}$
are used as initial basis in the online procedure.
Suppose that one online basis is added to a single coarse neighborhood $\omega_i^{\eps}$.
Then, there is a constant $D$ such that
\begin{equation}
\label{eq:online_conv}
\| u - u_{\text{\rm ms}}^{m+1}\|_{H^1(\Omega^{\eps})}^2
\leq (1+\delta_3)(1+\delta_2) \Big(1+\delta_1 - \theta C_s^{-1} \frac{\lambda^{i,\text{\rm off}}_{l_i+1}}{\lambda^{i,\text{\rm off}}_{l_i+1}+1}\Big) \| \hat{u} - u_{\text{\rm ms}}^m\|_{H^1(\Omega^{\eps})}^2
+ D \| u - \hat{u}\|_{H^1(\Omega^{\eps})}^2
\end{equation}
where $\delta_1,\delta_2,\delta_3 >0$ are arbitrary and $D$ depends only on $\delta_i, i=1,2,3$. In addition, $\theta$ is the relative residual defined by
\begin{equation*}
\theta = ||R_i||_{(V_0^i)^*}^2 \Big/ \sum_{i=1}^{N_u} \norm{R_i}_{(V^i)^*}^2.
\end{equation*}
\end{theorem}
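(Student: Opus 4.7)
The plan is to first derive a contraction of the snapshot-based error $\|\hat u - u_{\text{ms}}^{m+1}\|_a$ in terms of $\|\hat u - u_{\text{ms}}^m\|_a$ through the added online basis, and then pass to the reference error via triangle and Young inequalities. Setting up the Galerkin framework for Stokes is the first task: since $\hat u$, $u_{\text{ms}}^m$ and $u_{\text{ms}}^{m+1}$ all satisfy the same discrete divergence constraint against $Q_{\text{off}}$, and the pressure difference $\hat p - p_{\text{ms}}^{m+1}$ lives in $Q_{\text{off}}$, the velocity error $\hat u - u_{\text{ms}}^{m+1}$ is $a$-orthogonal to the discretely divergence-free subspace of $V_{\text{ms}}^{m+1}$. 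Hence $u_{\text{ms}}^{m+1}$ is the $a$-best approximation of $\hat u$ in that constrained subspace. The online basis $\phi^{\text{on}}$ is itself discretely divergence-free by its defining equation, so $u_{\text{ms}}^m + \alpha\phi^{\text{on}}$ lies in the admissible subspace for every $\alpha$.

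Optimizing $\alpha$ in this affine one-parameter family then gives
\[
\|\hat u - u_{\text{ms}}^{m+1}\|_a^2 \leq \|\hat u - u_{\text{ms}}^m\|_a^2 - \frac{a(\hat u - u_{\text{ms}}^m, \phi^{\text{on}})^2}{\|\phi^{\text{on}}\|_a^2}.
\]
The crucial identity, obtained by testing the online equation against $\phi^{\text{on}}$ and cancelling the pressure terms using the discrete divergence constraint, is $a(\hat u - u_{\text{ms}}^m, \phi^{\text{on}}) = R_i(\phi^{\text{on}}) = \|\phi^{\text{on}}\|_a^2 = \|R_i\|_{(V_0^i)^*}^2$. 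This collapses the correction to $\|\hat u - u_{\text{ms}}^{m+1}\|_a^2 \leq \|\hat u - u_{\text{ms}}^m\|_a^2 - \|R_i\|_{(V_0^i)^*}^2$. Next I would invert the a-posteriori estimate \eqref{eq:post1}: writing $\sum_j \|R_j\|_{(V^j)^*}^2 = \|R_i\|_{(V_0^i)^*}^2/\theta$ from the definition of $\theta$ and bounding $\max_j(1+1/\lambda_{l_j+1}^{j,\text{off}})$ in terms of $(\lambda_{l_i+1}^{i,\text{off}}+1)/\lambda_{l_i+1}^{i,\text{off}}$ plus a $\delta_1$-slack, I obtain the reverse inequality
\[
\|R_i\|_{(V_0^i)^*}^2 \geq \Big( \theta C_s^{-1} \tfrac{\lambda_{l_i+1}^{i,\text{off}}}{\lambda_{l_i+1}^{i,\text{off}}+1} - \delta_1\Big)\|\hat u - u_{\text{ms}}^m\|_a^2,
\]
which, combined with the preceding display, yields the $\hat u$-error contraction with coefficient $\bigl(1+\delta_1 - \theta C_s^{-1}\lambda_{l_i+1}^{i,\text{off}}/(\lambda_{l_i+1}^{i,\text{off}}+1)\bigr)$.

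The final step is purely algebraic: two successive triangle-plus-Young estimates convert the $a$-norm $\|\hat u - u_{\text{ms}}^{m+1}\|_a^2$ to the $H^1$-norm $\|u - u_{\text{ms}}^{m+1}\|_{H^1}^2$, paying the factors $(1+\delta_2)$ and $(1+\delta_3)$ on the contraction term while dumping all $\|u-\hat u\|_{H^1}^2$ contributions into the constant $D$. I expect the main obstacle to be step three, namely extracting a lower bound on the \emph{single} residual $\|R_i\|_{(V_0^i)^*}^2$ with the \emph{specific} eigenvalue $\lambda_{l_i+1}^{i,\text{off}}$ of the selected neighborhood, whereas the a-posteriori bound \eqref{eq:post1} naturally mixes contributions from all coarse neighborhoods and all their eigenvalues; the role of the free $\delta_1$ is precisely to absorb the gap between $\max_j(1+1/\lambda_{l_j+1}^{j,\text{off}})$ and $1+1/\lambda_{l_i+1}^{i,\text{off}}$. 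A secondary technical point is that making the constrained best-approximation argument of step one rigorous requires the inf-sup condition of Lemma~5.1 to ensure that the discretely divergence-free subspace of $V_{\text{ms}}^{m+1}$ is rich enough to realize the Galerkin projection without degrading the constants.
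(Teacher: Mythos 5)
Your overall architecture matches the paper's: perturb by $u_{\text{ms}}^m+\alpha\phi^{\text{on}}$, optimize $\alpha$, identify the decrement with $\|R_i\|_{(V_0^i)^*}^2$, lower-bound that via $\theta$ and the a posteriori estimate \eqref{eq:post1}, then convert to the $u$-error with triangle/Young steps. But there is a genuine gap in your first step. You claim that $u_{\text{ms}}^{m+1}$ is the $a$-best approximation of $\hat u$ in the discretely divergence-free subspace, and consequently that $a(\hat u-u_{\text{ms}}^m,\phi^{\text{on}})=R_i(\phi^{\text{on}})$. Neither holds: the snapshot solution $\hat u$ satisfies its Galerkin equation only against test functions in $V_{\text{snap}}$, and the online basis $\phi^{\text{on}}\in V^i_{h,0}$ (a local Stokes problem with a nonzero right-hand side, vanishing on $\partial\omega_i^{\eps}$) is \emph{not} a Stokes-harmonic extension and so does not lie in $V_{\text{snap}}$. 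The residual $R_i$ in \eqref{stokes-resid} is built from $f$, i.e.\ from the reference solution $u$ of \eqref{eq:fine_system2}, so the correct identity is $a(\hat u-u_{\text{ms}}^m,\phi^{\text{on}})=R_i(\phi^{\text{on}})+a(\hat u-u,\phi^{\text{on}})$ (after cancelling pressures). The quasi-optimality \eqref{eq:cea} is likewise with respect to $u$, not $\hat u$. The paper therefore carries the extra cross term $2\alpha\int_{\omega_i^{\eps}}\nabla(u-\hat u):\nabla\phi^{\text{on}}$ through the computation; bounding it by $\delta_1\|\hat u-u_{\text{ms}}^m\|_{H^1(\Omega^{\eps})}^2+\delta_1^{-1}\|u-\hat u\|_{H^1(\Omega^{\eps})}^2$ is precisely where the $\delta_1$ in the contraction factor and part of the constant $D$ come from. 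Your "collapsed" inequality $\|\hat u-u_{\text{ms}}^{m+1}\|_a^2\le\|\hat u-u_{\text{ms}}^m\|_a^2-\|R_i\|_{(V_0^i)^*}^2$ silently drops this term.

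A secondary problem follows from the first: having spent no $\delta_1$ on the cross term, you propose to use it to absorb the mismatch between $\min_j\lambda^{j,\text{off}}_{l_j+1}/(\lambda^{j,\text{off}}_{l_j+1}+1)$ (which is what \eqref{eq:post1} actually yields) and the specific factor $\lambda^{i,\text{off}}_{l_i+1}/(\lambda^{i,\text{off}}_{l_i+1}+1)$ in the statement. That gap is a fixed, possibly order-one quantity multiplied by $\theta C_s^{-1}$; it cannot be dominated by an \emph{arbitrary} $\delta_1>0$, so this repurposing of $\delta_1$ does not close the argument. (The paper itself is terse at this point and simply writes the single-index eigenvalue; its non-overlapping version takes a minimum over the enriched set $S$.) To repair your proof, replace best approximation of $\hat u$ by best approximation of $u$ as in \eqref{eq:cea}, keep the cross term, and assign $\delta_1$ to it via Young's inequality; the rest of your outline then goes through essentially as in the paper.
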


\begin{proof}
We will first consider the addition of only one online basis function $\phi_i^{\text{on}}$ to the space $V_{\text{off}}^m$.
For any function $\widetilde{u}_{\text{ms}} \in V_{\text{off}}^{m+1}$, by (\ref{eq:cea}), we have
\begin{equation}
\label{eq:proof0}
\| u - u_{\text{\rm ms}}^{m+1}\|_{H^1(\Omega^{\eps})} \leq \| u - \widetilde{u}_{\text{\rm ms}}\|_{H^1(\Omega^{\eps})}
\leq \| \hat{u} - \widetilde{u}_{\text{\rm ms}}\|_{H^1(\Omega^{\eps})} + \| u -\hat{u} \|_{H^1(\Omega^{\eps})}.
\end{equation}
We will derive an estimate for $\| \hat{u} - \widetilde{u}_{\text{\rm ms}}\|_{H^1(\Omega^{\eps})}$.
We take $\widetilde{u}_{\text{ms}} = u_{\text{ms}}^m + \alpha \phi_i^{\text{on}}$ where $\alpha$ is a scalar to be determined.
Then we have
\begin{equation*}
\| \hat{u} - \widetilde{u}_{\text{\rm ms}}\|_{H^1(\Omega^{\eps})}^2
= \| \hat{u} - u_{\text{\rm ms}}^m\|_{H^1(\Omega^{\eps})}^2 - 2\alpha \int_{\omega^{\eps}_i} \nabla (\hat{u}-u_{\text{ms}}^m) : \nabla \phi_i^{\text{on}}
+ \alpha^2 \| \phi_i^{\text{on}}\|_{H^1(\Omega^{\eps})}^2.
\end{equation*}
Using the definition of the residual $R_i$ and the fact that $\int_{\omega_i^{\eps}} \div (\phi_i^{\text{on}}) \, q = 0$ for all $q\in Q_{\text{off}}$, we have
\begin{equation*}
\| \hat{u} - \widetilde{u}_{\text{\rm ms}}\|_{H^1(\Omega^{\eps})}^2
= \| \hat{u} - u_{\text{\rm ms}}^m\|_{H^1(\Omega^{\eps})}^2 - 2\alpha R_i( \phi_i^{\text{on}})
+ \alpha^2 \| \phi_i^{\text{on}}\|_{H^1(\Omega^{\eps})}^2
+ 2\alpha \int_{\omega^{\eps}_i} \nabla (u-\hat{u}) : \nabla \phi_i^{\text{on}}.
\end{equation*}
Taking $\alpha = R_i(\phi_i^{\text{on}})/\| \phi_i^{\text{on}}\|_{H^1(\omega_i^{\eps})}^2 $, we have
\begin{equation}
\label{eq:proof1}
\| \hat{u} - \widetilde{u}_{\text{\rm ms}}\|_{H^1(\Omega^{\eps})}^2
= \| \hat{u} - u_{\text{\rm ms}}^m\|_{H^1(\Omega^{\eps})}^2 -  \frac{R_i( \phi_i^{\text{on}})^2}{\| \phi_i^{\text{on}}\|_{H^1(\omega_i^{\eps})}^2}
+ 2\alpha \int_{\omega^{\eps}_i} \nabla (u-\hat{u}) : \nabla \phi_i^{\text{on}}.
\end{equation}

Using (\ref{online_Stokes}), we have
\begin{equation}
\label{eq:proof2}
R_i(v) = \int_{\omega_i^{\eps}} \nabla \phi_i^{\text{on}} : \nabla v, \quad \forall v\in V^i_0.
\end{equation}
By (\ref{eq:Rn2}) and (\ref{eq:proof2}), we have $||R_i||_{(V_0^i)^*} \leq \| \phi_i^{\text{on}} \|_{H^1(\omega_i^{\eps})}$.
Taking $v=\phi_i^{\text{on}}$ in (\ref{eq:proof2}), we have $R_i(\phi_i^{\text{on}}) = \| \phi_i^{\text{on}}\|_{H^1(\omega_i^{\eps})}^2$.
Thus, (\ref{eq:proof1}) becomes
\begin{equation}
\label{eq:proof3}
\| \hat{u} - \widetilde{u}_{\text{\rm ms}}\|_{H^1(\Omega^{\eps})}^2
= \| \hat{u} - u_{\text{\rm ms}}^m\|_{H^1(\Omega^{\eps})}^2 -  ||R_i||_{(V_0^i)^*}^2
+ 2\alpha \int_{\omega^{\eps}_i} \nabla (u-\hat{u}) : \nabla \phi_i^{\text{on}}.
\end{equation}
Using the definition of $\theta$ and (\ref{eq:post1}), we have
\begin{equation}
\label{eq:proof4}
\| \hat{u} - \widetilde{u}_{\text{\rm ms}}\|_{H^1(\Omega^{\eps})}^2
\leq \Big(1 - \theta C_s^{-1} \frac{\lambda^{i,\text{off}}_{l_i+1}}{\lambda^{i,\text{off}}_{l_i+1}+1}\Big) \| \hat{u} - u_{\text{\rm ms}}^m\|_{H^1(\Omega^{\eps})}^2
+ 2\alpha \int_{\omega^{\eps}_i} \nabla (u-\hat{u}) : \nabla \phi_i^{\text{on}}.
\end{equation}
The last term in (\ref{eq:proof4}) can be estimated as
\begin{equation*}
2\alpha \int_{\omega^{\eps}_i} \nabla (u-\hat{u}) : \nabla \phi_i^{\text{on}}
\leq 2 \| u - \hat{u}\|_{H^1(\Omega^{\eps})} \, \frac{R_i(\phi_i^{\text{on}})}{\|\phi_i^{\text{on}}\|_{H^1(\omega_i^{\eps})}}
\end{equation*}
Using the definition of $R_i$, we have $R_i(\phi_i^{\text{on}}) = \int_{\omega_i^{\eps}} \nabla ( u - u_{\text{ms}}^m ) : \nabla\phi_i^{\text{on}}$. So,
\begin{equation*}
2\alpha \int_{\omega^{\eps}_i} \nabla (u-\hat{u}) : \nabla \phi_i^{\text{on}}
\leq 2 \| u - \hat{u}\|_{H^1(\Omega^{\eps})} \, \| u - u_{\text{ms}}^m \|_{H^1(\Omega^{\eps})}.
\end{equation*}
Notice that $2 \| u - \hat{u}\|_{H^1(\Omega^{\eps})} \, \| \hat{u} - u_{\text{ms}}^m \|_{H^1(\Omega^{\eps})}
\leq \delta_1^{-1} \| u - \hat{u}\|_{H^1(\Omega^{\eps})}^2 + \delta_1 \| \hat{u} - u_{\text{ms}}^m \|_{H^1(\Omega^{\eps})}^2$
for any $\delta_1>0$.
Therefore, (\ref{eq:proof4}) becomes
\begin{equation}
\label{eq:proof5}
\| \hat{u} - \widetilde{u}_{\text{\rm ms}}\|_{H^1(\Omega^{\eps})}^2
\leq \Big(1+\delta_1 - \theta C_s^{-1} \frac{\lambda^{i,\text{off}}_{l_i+1}}{\lambda^{i,\text{off}}_{l_i+1}+1}\Big) \| \hat{u} - u_{\text{\rm ms}}^m\|_{H^1(\Omega^{\eps})}^2
+ (2+\delta_1^{-1}) \| u - \hat{u}\|_{H^1(\Omega^{\eps})}^2.
\end{equation}

Finally, combining (\ref{eq:proof0}) and (\ref{eq:proof5}), we have
\begin{equation}
\| u - u_{\text{\rm ms}}^{m+1}\|_{H^1(\Omega^{\eps})}^2
\leq (1+\delta_2) \Big(1+\delta_1 - \theta C_s^{-1} \frac{\lambda^{i,\text{off}}_{l_i+1}}{\lambda^{i,\text{off}}_{l_i+1}+1}\Big) \| \hat{u} - u_{\text{\rm ms}}^m\|_{H^1(\Omega^{\eps})}^2
+ (3+\delta_1^{-1}+\delta_2^{-1}) \| u - \hat{u}\|_{H^1(\Omega^{\eps})}^2.
\end{equation}
We obtain the desired result by noting
that $$\| \hat{u} - u_{\text{\rm ms}}^m\|_{H^1(\Omega^{\eps})}^2 \leq (1+\delta_3) \| u - u_{\text{ms}}^m \|_{H^1(\Omega^{\eps})}^2
+ (1+\delta_3^{-1}) \| u - \hat{u} \|_{H^1(\Omega^{\eps})}^2$$
for any $\delta_3>0$.

\end{proof}

We remark that, in order to obtain rapid convergence, one needs to choose $l_i$
large enough so that $\lambda^{i,\text{\rm off}}_{l_i+1}$ is large. In this case, the quantity
$\lambda^{i,\text{\rm off}}_{l_i+1}/(\lambda^{i,\text{\rm off}}_{l_i+1}+1)$ is close to one.
Then, (\ref{eq:online_conv}) shows that the resulting online adaptive enrichment procedure has a rapid convergence.

Theorem \ref{thm:online} gives the convergence of our online adaptive enrichment procedure
when one online basis is added at a time.
One can also add online basis in non-overlapping coarse neighborhoods.
Using the same proof as Theorem \ref{thm:online}, we obtain the following result.

\begin{theorem}
Let $u$ be the reference solution defined in (\ref{eq:fine_system2}), $\hat{u}$ be the snapshot solution defined in (\ref{eq:fine_system3})
and $u_{\text{\rm ms}}^m$ be the multiscale solution
of (\ref{eq:coarse_system}) in the enrichment level $m$.
Assume that $l_i$ offline basis functions for the coarse neighborhood $\omega_i^{\eps}$
are used as initial basis in the online procedure.
Let $S$ be the index set for the non-overlapping coarse neighborhoods where online basis functions are added.
Then, there is a constant $D$ such that
\begin{equation}
\label{eq:online_conv1}
\| u - u_{\text{\rm ms}}^{m+1}\|_{H^1(\Omega^{\eps})}^2
\leq (1+\delta_3)(1+\delta_2) \Big(1+\delta_1 - \theta C_s^{-1} \min_{j\in S} \frac{\lambda^{i,\text{\rm off}}_{l_j+1}}{\lambda^{i,\text{\rm off}}_{l_j+1}+1}\Big) \| \hat{u} - u_{\text{\rm ms}}^m\|_{H^1(\Omega^{\eps})}^2
+ D \| u - \hat{u}\|_{H^1(\Omega^{\eps})}^2
\end{equation}
where $\delta_1,\delta_2,\delta_3 >0$ are arbitrary and $D$ depends only on $\delta_i, i=1,2,3$. In addition, $\theta$ is the relative residual defined by
\begin{equation*}
\theta = \sum_{i\in S} ||R_i||_{(V_0^i)^*}^2 \Big/ \sum_{i=1}^{N_u} \norm{R_i}_{(V^i)^*}^2.
\end{equation*}

\end{theorem}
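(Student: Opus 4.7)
The plan is to mimic the single-enrichment argument of Theorem~\ref{thm:online} step-by-step, exploiting the non-overlapping hypothesis on $\{\omega_j^{\eps}\}_{j\in S}$ to decouple the contributions of the individual online basis functions. First, by (\ref{eq:cea}) combined with the triangle inequality,
$$
\| u - u_{\text{ms}}^{m+1}\|_{H^1(\Omega^{\eps})} \leq \| \hat{u} - \widetilde{u}_{\text{ms}}\|_{H^1(\Omega^{\eps})} + \| u - \hat{u}\|_{H^1(\Omega^{\eps})}
$$
for any $\widetilde{u}_{\text{ms}} \in V_{\text{off}}^{m+1}$; I take the concrete choice $\widetilde{u}_{\text{ms}} = u_{\text{ms}}^m + \sum_{j\in S} \alpha_j \phi_j^{\text{on}}$, where each $\phi_j^{\text{on}}$ is obtained from (\ref{online_Stokes}) on the neighborhood $\omega_j^{\eps}$.

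Next, I expand $\| \hat{u} - \widetilde{u}_{\text{ms}}\|_{H^1(\Omega^{\eps})}^2$. The crucial simplification is that non-overlap of the supports forces $\int \nabla \phi_i^{\text{on}} : \nabla \phi_j^{\text{on}} = 0$ for $i\neq j$ in $S$, so that the expansion splits into independent contributions in each $\omega_j^{\eps}$. Optimizing each $\alpha_j$ separately with $\alpha_j = R_j(\phi_j^{\text{on}})/\|\phi_j^{\text{on}}\|_{H^1(\omega_j^{\eps})}^2$ and using $R_j(\phi_j^{\text{on}}) = \|\phi_j^{\text{on}}\|_{H^1(\omega_j^{\eps})}^2 = \|R_j\|_{(V_0^j)^*}^2$, as in (\ref{eq:proof3}), gives
$$
\| \hat{u} - \widetilde{u}_{\text{ms}}\|_{H^1(\Omega^{\eps})}^2
= \| \hat{u} - u_{\text{ms}}^m\|_{H^1(\Omega^{\eps})}^2 - \sum_{j\in S} \|R_j\|^2_{(V_0^j)^*} + 2\sum_{j\in S} \alpha_j \int_{\omega_j^{\eps}} \nabla(u-\hat{u}):\nabla\phi_j^{\text{on}}.
$$

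The residual sum is then lower-bounded by combining the definition of $\theta$ with the a-posteriori estimate (\ref{eq:post1}):
$$
\sum_{j\in S} \|R_j\|_{(V_0^j)^*}^2 = \theta \sum_{k=1}^{N_u} \|R_k\|_{(V^k)^*}^2 \geq \theta \, C_s^{-1} \min_{j\in S} \frac{\lambda^{j,\text{off}}_{l_j+1}}{\lambda^{j,\text{off}}_{l_j+1}+1}\, \| \hat{u} - u_{\text{ms}}^m\|_{H^1(\Omega^{\eps})}^2,
$$
where bounding $\max_{k}(1 + 1/\lambda^{k,\text{off}}_{l_k+1})$ by the reciprocal at the minimizer over $S$ is legitimate because the adaptive algorithm concentrates the residual mass inside $S$, so the small-eigenvalue contributions there dominate. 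The cross term in the previous display is handled exactly as in Theorem~\ref{thm:online}: $|\alpha_j|\|\phi_j^{\text{on}}\|_{H^1(\omega_j^{\eps})} = \|R_j\|_{(V_0^j)^*}$ combines with Cauchy--Schwarz (after noting $R_j(\phi_j^{\text{on}}) = \int \nabla(u-u_{\text{ms}}^m):\nabla\phi_j^{\text{on}}$ via the divergence constraint on $\phi_j^{\text{on}}$) to yield a factor $2\|u-\hat{u}\|_{H^1}\|\hat{u}-u_{\text{ms}}^m\|_{H^1}$, which Young's inequality with parameter $\delta_1$ absorbs. The final result (\ref{eq:online_conv1}) follows after inserting two further Young's inequalities with parameters $\delta_2$ (for the Cea step) and $\delta_3$ (to interchange $\|u-u_{\text{ms}}^m\|$ and $\|\hat{u}-u_{\text{ms}}^m\|$), gathering all irreducible terms into the constant $D$.

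The main obstacle I expect is the step extracting $\min_{j\in S}$ rather than $\min_{j}$ over all neighborhoods in the lower bound for the residual sum: the a-posteriori estimate (\ref{eq:post1}) sums over all $N_u$ coarse neighborhoods, while the numerator $\sum_{j\in S}\|R_j\|^2$ produced by the argument is supported on $S$. The remedy is to appeal to the adaptive selection rule, which guarantees that the residuals in $S$ are the largest, so the ratio $\theta$ is controlled by the same eigenvalues that appear in the min. Once this matching is justified, the remaining manipulations are entirely routine and parallel to those in Theorem~\ref{thm:online}.
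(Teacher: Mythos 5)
Your proposal is essentially the paper's own argument: the paper disposes of this theorem by remarking that it follows ``using the same proof as Theorem \ref{thm:online}'', and your explicit decoupling of the quadratic form through the disjoint supports of the $\phi_j^{\text{on}}$, the separate optimization of each $\alpha_j$, and the identities $R_j(\phi_j^{\text{on}})=\|\phi_j^{\text{on}}\|_{H^1(\omega_j^{\eps})}^2=\|R_j\|_{(V_0^j)^*}^2$ are exactly how that extension goes. The one point you flag --- that (\ref{eq:post1}) only yields the factor $\min_{1\le k\le N_u}\lambda^{k,\text{off}}_{l_k+1}/(\lambda^{k,\text{off}}_{l_k+1}+1)$ over \emph{all} neighborhoods rather than $\min_{j\in S}$ --- is a genuine imprecision, but it is inherited from the paper itself, whose single-basis Theorem \ref{thm:online} already displays the specific $\lambda^{i,\text{off}}_{l_i+1}$ where the clean deduction from (\ref{eq:post1}) only gives the global minimum; your proposed remedy via the adaptive selection rule is not rigorous (a large residual on $\omega_j^{\eps}$ says nothing about where $\lambda^{k,\text{off}}_{l_k+1}$ is minimized), and the honest fix is simply to state the bound with the minimum taken over all coarse neighborhoods, which still yields the claimed contraction.
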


The above result suggests that adding more online basis functions at each iteration
will speed up the convergence.
Lastly, we remark that the convergence for the pressure can be obtained using the inf-sup condition (\ref{eq:infsup}).

\section{Conclusion}
\label{conclusion}

We present an efficient multiscale procedure for solving PDEs in perforated domains. We consider elliptic, elastic, and Stokes systems.
In our previous work \cite{CELV2015}, we presented a first step in constructing
the offline multiscale basis functions
(without analysis) for solving PDEs in perforated domains.
It is known that
the convergence of multiscale methods can be significantly accelerated
if appropriate online basis functions are constructed and appropriate number
of offline basis functions are used. The construction of online basis functions
relies on analysis and the choice of the offline basis functions.
In this paper, we (1) develop analysis for GMsFEM for perforated domains
(2) design procedures for constructing online multiscale basis functions
(3) present analysis of online multiscale procedures (4) develop adaptive procedures (5) present numerical
results.
By using a computable error indicator, we locate regions,
 where enrichment is necessary,
and construct new online basis functions in order to improve the accuracy.
Our numerical results for the elasticity equation and the Stokes system
show that the method has an excellent performance and rapid convergence.
In particular, only a few online basis functions in some selected regions
improve the accuracy of the solution.
Our analysis shows that the convergence rate depends on the number of offline basis functions, and one can
obtain a fast convergence by including enough offline basis functions.
This convergence theory can also be applied to the Laplace equation and the elasticity equation.
One possible future direction is the goal-oriented adaptivity \cite{chung2015goal},
in which basis functions are added in order to reduce the goal error.

\section{Acknowledgement}
YE's work is partially supported by the U.S. Department of Energy Office of Science, Office of Advanced Scientific Computing Research, Applied Mathematics program under Award Number DE-FG02-13ER26165 and  the DoD Army ARO Project and NSF (DMS 0934837 and DMS 0811180).
Eric Chung's research is partially supported by Hong Kong RGC General Research Fund (Project: 400813)
and CUHK Faculty of Science Research Incentive Fund 2015-16.
MV's  work is partially supported by Russian Science Foundation Grant RS 15-11-10024 and  RFBR 15-31-20856.

\bibliographystyle{siam}
\bibliography{references}

\begin{thebibliography}{10}

\bibitem{allaire1991homogenization}
{\sc G.~Allaire}, {\em Homogenization of the navier-stokes equations in open
  sets perforated with tiny holes ii: Non-critical sizes of the holes for a
  volume distribution and a surface distribution of holes}, Archive for
  Rational Mechanics and Analysis, 113 (1991), pp.~261--298.

\bibitem{ab05}
{\sc G.~Allaire and R.~Brizzi}, {\em A multiscale finite element method for
  numerical homogenization}, SIAM J. Multiscale Modeling and Simulation, 4
  (2005), pp.~790--812.

\bibitem{allaire2015upscaling}
{\sc G.~Allaire and H.~Hutridurga}, {\em Upscaling nonlinear adsorption in
  periodic porous media--homogenization approach}, Applicable Analysis,
  (2015), pp.~1--36.

\bibitem{Babuska}
{\sc I.~Babu{\v{s}}ka, V.~Nistor, and N.~Tarfulea}, {\em Generalized finite
  element method for second-order elliptic operators with {D}irichlet boundary
  conditions}, J. Comput. Appl. Math., 218 (2008), pp.~175--183.

\bibitem{bare2015non}
{\sc Z.~Bare, J.~Orlik, and G.~Panasenko}, {\em Non homogeneous dirichlet
  conditions for an elastic beam: an asymptotic analysis}, Applicable Analysis,
   (2015), pp.~1--12.

\bibitem{beliaev1996darcy}
{\sc A.~Y. Beliaev and S.~Kozlov}, {\em Darcy equation for random porous
  media}, Communications on pure and applied mathematics, 49 (1996), pp.~1--34.

\bibitem{brown2014multiscale}
{\sc D.~L. Brown and D.~Peterseim}, {\em A multiscale method for porous
  microstructures}, arXiv preprint arXiv:1411.1944,  (2014).

\bibitem{randomized2014}
{\sc V.~Calo, Y.~Efendiev, J.~Galvis, and G.~Li}, {\em Randomized oversampling
  for generalized multiscale finite element methods}, arXiv preprint, arXiv:
  1409.7114,  (2014).

\bibitem{Cao06}
{\sc L.~Cao}, {\em Multiscale asymptotic expansion and finite element methods
  for the mixed boundary value problems of second order elliptic equation in
  perforated domains}, Numerische Mathematik, 103 (2006), pp.~11--45.

\bibitem{Cao10}
{\sc L.~Cao, Y.~Zhang, W.~Allegretto, and Y.~Lin}, {\em Multiscale asymptotic
  method for {M}axwell's equations in composite material}, SIAM J. Numer.
  Anal., 47(6) (2010), pp.~4257--4289.

\bibitem{chan2015adaptive}
{\sc H.~Y. Chan, E.~T. Chung, and Y.~Efendiev}, {\em Adaptive mixed {GM}s{FEM}
  for flows in heterogeneous media}, arXiv preprint arXiv:1507.01659,  (2015).

\bibitem{Chu_Hou_MathComp_10}
{\sc C.-C. Chu, I.~G. Graham, and T.-Y. Hou}, {\em A new multiscale finite
  element method for high-contrast elliptic interface problems}, Math. Comp.,
  79 (2010), pp.~1915--1955.

\bibitem{MixedGMsFEM}
{\sc E.~Chung, Y.~Efendiev, and C.~Lee}, {\em Mixed generalized multiscale
  finite element methods and applications}, SIAM Multicale Model. Simul., 13
  (2014), pp.~338--366.

\bibitem{mixed-perforated}
{\sc E.~Chung, W.~T. Leung, and M.~Vasilyeva}, {\em Mixed {GM}s{FEM} for second
  order elliptic problem in perforated domains}, unpublished,  (2014).

\bibitem{chung2015online}
{\sc E.~T. Chung, Y.~Efendiev, and W.~T. Leung}, {\em An online generalized
  multiscale discontinuous galerkin method ({GM}s{DGM}) for flows in
  heterogeneous media}, arXiv preprint arXiv:1504.04417,  (2015).

\bibitem{chung2015residual}
{\sc E.~T. Chung, Y.~Efendiev, and W.~T. Leung}, {\em Residual-driven online
  generalized multiscale finite element methods}, Journal of Computational
  Physics, 302 (2015), pp.~176--190.

\bibitem{AdaptiveGMsFEM}
{\sc E.~T. Chung, Y.~Efendiev, and G.~Li}, {\em An adaptive {GM}s{FEM} for high
  contrast flow problems}, J. Comput. Phys., 273 (2014), pp.~54--76.

\bibitem{CELV2015}
{\sc E.~T. Chung, Y.~Efendiev, G.~Li, and M.~Vasilyeva}, {\em Generalized
  multiscale finite element method for problems in perforated heterogeneous
  domains}, Applicable Analysis, 255 (2015), pp.~1--15.

\bibitem{chung2015goal}
{\sc E.~T. Chung, W.~T. Leung, and S.~Pollock}, {\em Goal-oriented adaptivity
  for {GM}s{FEM}}, Journal of Computational and Applied Mathematics,  (2015),
  pp.~625--637.

\bibitem{ee03}
{\sc W.~E and B.~Engquist}, {\em Heterogeneous multiscale methods}, Comm. Math.
  Sci., 1 (2003), pp.~87--132.

\bibitem{GMsFEM13}
{\sc Y.~Efendiev, J.~Galvis, and T.~Hou}, {\em Generalized multiscale finite
  element methods (gmsfem)}, Journal of Computational Physics, 251 (2013),
  pp.~116--135.

\bibitem{Efendiev_oversampling13}
{\sc Y.~Efendiev, J.~Galvis, G.~Li, and M.~Presho}, {\em Generalized multiscale
  finite element methods. oversampling strategies}, International Journal for
  Multiscale Computational Engineering, 12(6) (2014), pp.~465--484.

\bibitem{eh09}
{\sc Y.~Efendiev and T.~Hou}, {\em {Multiscale Finite Element Methods: Theory
  and Applications}}, vol.~4 of Surveys and Tutorials in the Applied
  Mathematical Sciences, Springer, New York, 2009.

\bibitem{fratrovic2015nonlinear}
{\sc T.~Fratrovi{\'c} and E.~Maru{\v{s}}i{\'c}-Paloka}, {\em Nonlinear
  brinkman-type law as a critical case in the polymer fluid filtration},
  Applicable Analysis,  (2015), pp.~1--22.

\bibitem{gilbert2003acoustic}
{\sc R.~P. Gilbert and M.-j. Ou}, {\em Acoustic wave propagation in a composite
  of two different poroelastic materials with a very rough periodic interface:
  a homogenization approach}, International Journal for Multiscale
  Computational Engineering, 1 (2003).

\bibitem{gilbert2014acoustic}
{\sc R.~P. Gilbert, A.~Panchenko, and A.~Vasilic}, {\em Acoustic propagation in
  a random saturated medium: the biphasic case}, Applicable Analysis, 93
  (2014), pp.~676--697.

\bibitem{gilbert2006prototype}
{\sc R.~P. Gilbert, A.~Panchenko, and X.~Xie}, {\em A prototype homogenization
  model for acoustics of granular materials}, International Journal for
  Multiscale Computational Engineering, 4 (2006).

\bibitem{griebel2010homogenization}
{\sc M.~Griebel and M.~Klitz}, {\em Homogenization and numerical simulation of
  flow in geometries with textile microstructures}, Multiscale Modeling \&
  Simulation, 8 (2010), pp.~1439--1460.

\bibitem{henning2009heterogeneous}
{\sc P.~Henning and M.~Ohlberger}, {\em The heterogeneous multiscale finite
  element method for elliptic homogenization problems in perforated domains},
  Numerische Mathematik, 113 (2009), pp.~601--629.

\bibitem{hw97}
{\sc T.~Hou and X.~Wu}, {\em A multiscale finite element method for elliptic
  problems in composite materials and porous media}, J. Comput. Phys., 134
  (1997), pp.~169--189.

\bibitem{iliev2015modeling}
{\sc O.~Iliev, R.~Kirsch, Z.~Lakdawala, S.~Rief, and K.~Steiner}, {\em Modeling
  and simulation of filtration processes}, in Currents in Industrial
  Mathematics, Springer, 2015, pp.~163--228.

\bibitem{iliev2013numerical}
{\sc O.~Iliev, Z.~Lakdawala, and V.~Starikovicius}, {\em On a numerical subgrid
  upscaling algorithm for stokes--brinkman equations}, Computers \& Mathematics
  with Applications, 65 (2013), pp.~435--448.

\bibitem{Iliev_MMS_11}
{\sc O.~Iliev, R.~Lazarov, and J.~Willems}, {\em Variational multiscale finite
  element method for flows in highly porous media}, Multiscale Model. Simul., 9
  (2011), pp.~1350--1372.

\bibitem{Jikov91}
{\sc V.~V. Jikov, S.~M. Kozlov, and O.~A. Oleinik}, {\em Homogenization of
  Differential Operators and Integral Functionals}, Springer-Verlag, 1991.

\bibitem{Bris14}
{\sc L.~{Le Bris}, F.~Legoll, and A.~Lozinski}, {\em An {M}s{FEM} type approach
  for perforated domains}, Multiscale Modeling \& Simulation, 12(3) (2014),
  pp.~1046--1077.

\bibitem{maz2010asymptotic}
{\sc V.~Maz'ya and A.~Movchan}, {\em Asymptotic treatment of perforated domains
  without homogenization}, Mathematische Nachrichten, 283 (2010), pp.~104--125.

\bibitem{Maz'ya13}
{\sc V.~Maz'ya, A.~Movchan, and M.~Nieves}, {\em Green Kernels and Meso-Scale
  Approximations in Perforated Domains}, Springer-Berlin, Lecture Notes in
  Mathematics, 2077, 2013.

\bibitem{muntean2012analysis}
{\sc A.~Muntean, M.~Ptashnyk, and R.~E. Showalter}, {\em Analysis and
  approximation of microstructure models}, Applicable Analysis, 91 (2012),
  pp.~1053--1054.

\bibitem{ohlberger2015error}
{\sc M.~Ohlberger and F.~Schindler}, {\em Error control for the localized
  reduced basis multi-scale method with adaptive on-line enrichment}, arXiv
  preprint arXiv:1501.05202,  (2015).

\bibitem{oleinik1996homogenization}
{\sc O.~A. Oleinik and T.~A. Shaposhnikova}, {\em On the homogenization of the
  poisson equation in partially perforated domains with arbitrary density of
  cavities and mixed type conditions on their boundary}, Atti della Accademia
  Nazionale dei Lincei. Classe di Scienze Fisiche, Matematiche e Naturali.
  Rendiconti Lincei. Matematica e Applicazioni, 7 (1996), pp.~129--146.

\bibitem{pankratova2015spectral}
{\sc I.~Pankratova and K.~Pettersson}, {\em Spectral asymptotics for an
  elliptic operator in a locally periodic perforated domain}, Applicable
  Analysis, 94 (2015), pp.~1207--1234.

\bibitem{sanchez1980non}
{\sc E.~S{\'a}nchez-Palencia}, {\em Non-homogeneous media and vibration
  theory}, in Non-homogeneous media and vibration theory, vol.~127, 1980.

\bibitem{verleye2008permeability}
{\sc B.~Verleye, R.~Croce, M.~Griebel, M.~Klitz, S.~V. Lomov, G.~Morren,
  H.~Sol, I.~Verpoest, and D.~Roose}, {\em Permeability of textile
  reinforcements: Simulation, influence of shear and validation}, Composites
  Science and Technology, 68 (2008), pp.~2804--2810.

\end{thebibliography}

\end{document}